\documentclass[letterpaper, 10 pt, conference]{ieeeconf}

\IEEEoverridecommandlockouts

\usepackage{balance}

\usepackage[english]{babel}
\usepackage{amssymb,amsmath} 
\usepackage[latin1,utf8]{inputenc}
\usepackage{graphicx}
\usepackage{epstopdf}
\usepackage{comment}
\usepackage{subcaption}
\usepackage{wrapfig}
\usepackage{cite}
\usepackage[utf8]{inputenc}
\usepackage[mathscr]{euscript}
\usepackage{hyperref}
\usepackage{color}
\usepackage{bm}

\usepackage[english]{babel}
\usepackage[utf8]{inputenc}
\usepackage{graphicx}
\usepackage{listings}
\usepackage{verbatim}
\usepackage{bigints}
\usepackage{comment}
\usepackage{braket}
\usepackage{epigraph}
\usepackage{float}
\usepackage{mathtools}
\usepackage{pgfplots}
\usepackage{algpseudocode}
\usepackage{algorithm}
\usepackage{tikz-network}
\usepackage{tikz}
\usepackage{comment}
\usepackage{hyperref}
\hypersetup{colorlinks, breaklinks, linkcolor=black,citecolor=black,filecolor=black,urlcolor=black} 
\usepackage{graphicx} 
\usepackage{color} 
\usepackage{amsmath,amssymb, amsfonts} 
\usepackage{listings} 
\usepackage{booktabs} 
\usepackage{xspace} 
\usepackage{tabularx}
\usepackage{xcolor}
\usepackage{gensymb}
\usepackage{textcomp}
\usepackage[printonlyused,withpage]{acronym} 
\usepackage{rotating} 
\usepackage{microtype} 
\usepackage{mathrsfs}
\usepackage{cleveref}
\usepackage{tikz-cd}
\usepackage{wrapfig}
\usepackage{dsfont}
\usepackage{lipsum}
\usepackage{longtable}
\usepackage{siunitx}
\usepackage{subfiles}
\graphicspath{{images/}{../images/}}
\usepackage{graphicx}
\usepackage{subcaption}
\usepackage{float}
\usepackage{dsfont}
\usepackage{physics}
\usepackage[normalem]{ulem}
\usepackage{soul}
\usepackage{tikz}
\usetikzlibrary{fit,positioning,arrows,automata}

\DeclareMathOperator{\dom}{dom}

\newcommand{\one}{\mathbf{1}}
\def\mR{{\mathbb R}}

\newcommand{\supp}{\text{supp}}
\DeclareMathOperator{\diag}{diag}

\newcommand{\Tau}{\mathcal{T}}

\def\XXint#1#2#3{{\setbox0=\hbox{$#1{#2#3}{\int}$} 
		\vcenter{\hbox{$#2#3$}}\kern-.5\wd0}}

\renewcommand{\hat}[1]{\widehat{#1}}
\renewcommand{\theta}{\vartheta}
\renewcommand{\epsilon}{\varepsilon}

\def\minwrt[#1]{\underset{#1}{\text{minimize }}}
\def\argminwrt[#1]{\underset{#1}{\text{arg min }}}
\def\maxwrt[#1]{\underset{#1}{\text{maximize }}}
\def\argmaxwrt[#1]{\underset{#1}{\text{arg max }}}
\def\maxemphwrt[#1]{\underset{#1}{\text{\emph{maximize} }}}

\def\minwrt[#1]{\underset{#1}{\text{minimize }}}
\def\argminwrt[#1]{\underset{#1}{\text{arg min }}}
\def\maxwrt[#1]{\underset{#1}{\text{maximize }}}
\def\argmaxwrt[#1]{\underset{#1}{\text{arg max }}}
\def\maxemphwrt[#1]{\underset{#1}{\text{\emph{maximize} }}}
\newcommand{\ett}{{\bf 1}}

\newtheorem{theorem}{Theorem}
\newtheorem{remark}{Remark}
\newtheorem{proposition}{Proposition}
\newtheorem{corollary}{Corollary}
\newtheorem{lemma}{Lemma}
\newtheorem{definition}{Definition}
\newtheorem{example}{Example}

\newtheorem{assumption}{Assumption}

\crefname{assumption}{Assumption}{Assumptions}
\Crefname{assumption}{Assumption}{Assumptions}

\def\ccD{{\mathcal{D}}}

\definecolor{Michele}{HTML}{009B55}

\newcommand{\T}{\Tau}
\DeclareMathOperator{\Span}{span}
\usepackage{subfloat}
\usepackage{stfloats}
\usepackage{xcolor}

\title{\LARGE \bf
A proximal approach to the Schr{\"o}dinger bridge problem with incomplete information and application to contamination tracking in water networks}

\author{Michele Mascherpa, Victor Molnö, Carsten Skovmose Kallesøe and Johan Karlsson
\thanks{This work is supported by KTH Digital Futures, project DEMOCRITUS and the Swedish Research
Council (VR) under grant 2020-03454. The SWIL setup is funded by Poul Due Jensen Foundation.}
\thanks{M.~Mascherpa and J.~Karlsson are with the Division of Optimization and Systems Theory, Department of Mathematics, KTH Royal Institute of Technology, Stockholm, Sweden. {\tt\small micmas@kth.se}, {\tt\small johan.karlsson@math.kth.se}.
V.~Molnö is with the Division of Decision and Control Systems, EECS, KTH Royal Institute of Technology, {\tt\small vmolno@kth.se}.
C.S. ~Kallesøe is with the University of Aalborg, Denmark and with Grundfos, Denmark, {\tt\small csk@es.aau.dk}.}
}

\begin{document}

\maketitle
\thispagestyle{empty}
\pagestyle{empty}

\begin{abstract}
In this work, we study a discrete Schr{\"o}dinger bridge problem with partial marginal observations. A main difficulty compared to the classical Schr{\"o}dinger bridge formulation is that our problem is not strictly convex and standard Sinkhorn-type methods cannot be directly applied. 
To address this issue, we propose a scalable computational method based on an entropic proximal scheme. 
Furthermore, we develop a framework for this problem that includes duality results, characterization of the optimal solutions, and an observability condition that determines when the optimal solution is unique.
We validate the method on the problem of estimating contamination in a water distribution network, where the partial marginals correspond to measured pollutant concentrations at the sensor locations. The experiments were conducted on a laboratory-scale water distribution network.

\end{abstract}

\section{Introduction}

The Schr{\"o}dinger bridge problem is a classical problem in statistical mechanics. Given two observations of a particle distribution at two time instances, and a prior model of the particle evolutions, the Schr{\"o}dinger bridge describes the most likely evolution of the particles between the two distributions. This evolution is characterized as the one that minimizes the relative entropy with respect to the prior while matching the observations.
The Schr{\"o}dinger bridge problem therefore provides a probabilistic framework for describing transport processes evolving over time and has found applications in areas such as stochastic control and inference \cite{opper2019variational, pavon2021data, chen2021stochastic, SinHaaZha20}. Furthermore, its well established connection with optimal transport, in particular with the entropy-regularized formulation  \cite{leonard2013survey, chen2016relation}, allows for the discrete Schr{\"o}dinger bridge problem to be solved efficiently with algorithms based on Sinkhorn iterations \cite{cuturi2013sinkhorn, peyre2019computational, haasler2021pgm}.

A natural extension of the Schr{\"o}dinger bridge problem is the case in which the marginal observations, used to infer the  Schr{\"o}dinger bridge, are not fully observed, and only partial observations are available. This arises, for instance, in networked systems where only a subset of the nodes is observed, but one seeks to infer the evolution of the whole system.  
In this setting, the problem is generally ill-posed, since the total mass of the system is not known a priori. This may lead to a loss of uniqueness of the optimal solution and, moreover, prevents the direct application of classical Sinkhorn-type iterations, which rely on fixed marginal masses.

The Schr{\"o}dinger bridge problem with partial information has been studied in \cite{mascherpa2023estimating} in the context of contaminant spread in water networks, where sensors placed at selected locations provide partial observations of the system at each time step. There, full knowledge of the first marginal (and thus of the contaminant source) is assumed, which fixes the total mass and avoids the aforementioned ill-posedness. The resulting Schr{\"o}dinger bridge problem is addressed via a dual formulation and solved using a dual coordinate ascent scheme.
More recently, \cite{pathan2024entropy} investigated discrete Schr{\"o}dinger bridge problems with incomplete information in the initial--final marginal setting. In that work, neither marginal is assumed to be fully known, but a mass-normalization condition is imposed. This allows the associated Schr{\"o}dinger system to be solved through an iterative procedure similar to Sinkhorn iterations for multimarginal entropic optimal transport.
In a related setting, \cite{chen2025optimal} studies Schr{\"o}dinger bridge problems with unbalanced marginals, in which the initial and final distributions are fully specified but may have different total mass, modeled through diffusive dynamics with killing.

One motivating application for Schr{\"o}dinger bridge problems with partial information arises in contamination tracking in water distribution networks \cite{mascherpa2023estimating}. Such networks are critical infrastructure systems, where accidental or malicious contamination events pose serious risks to public health \cite{art:Islam_EnvironmentalReviews2015,art:Bjelkmar_BMC2017,art:Schijven_WaterResearch2026}. Protection of public health in the presence of contamination threats has therefore been widely studied, including the design of early warning systems and mitigation strategies \cite{art:Rasekh_EnvironmentalModellingSoftware2014,art:preis_WaterResourcesPlanning2006,art:usahandbook2006}. In practice, the effectiveness of such strategies crucially depends on the ability to accurately identify the affected area, for instance to isolate contaminated regions and apply targeted flushing procedures \cite{art:Shafiee_ASCE2017,art:Rathore_ASCE2025}. 
This, in turn, requires accurate information on the spatial and temporal evolution of water quality, typically inferred from limited observations.
The problem of contaminant detection and source identification in water distribution networks has been previously considered using both model-based and data-driven approaches (see, e.g., \cite{art:Laird_ASCE2005,art:Guan_ASCE2006,art:preis_WaterResourcesPlanning2006,art:Costa_WaterResorucesManagement2013,art:Rutkowski_IFAC2018,art:Huang_WaterResourcePlanning2009, art:ELIADESdetection}, and the survey ~\cite{art:ELIADEsurvey}). 
While these approaches focus on application-specific source identification strategies, the present work is concerned with the underlying inference problem arising from partial observations over time, independently of a particular network model or contaminant type.

In this work, we consider a formulation of the discrete Schr{\"o}dinger bridge problem with partial information in which neither a full marginal distribution nor the total mass of the system are assumed to be known. To address the resulting ill-posedness, we propose an entropic proximal point scheme that alternates between Schr{\"o}dinger bridge problems with a fixed first marginal and updates of the unobserved mass components. Each proximal step reduces to a problem that can be solved efficiently using Sinkhorn-type iterations. We further analyze the structure of the solution set and derive conditions under which uniqueness of the optimal solution is recovered, relating them to an observability property of an associated linear dynamical system. The proposed methodology is validated on the contamination tracking in water network application, using data collected at the Smart Water Infrastructures Laboratory (SWIL) \cite{val2021smart}, a modular test facility at Aalborg University, Denmark, designed to emulate water distribution networks.

The main contributions of this paper are as follows:
\begin{enumerate}
    \item We extend the model proposed in \cite{mascherpa2023estimating} to the case of a partially observed initial state and solve the resulting optimization problem with an entropic proximal algorithm. In the pollution-tracking application, this allows us to identify the contamination source, and not only the downstream spread of contaminants, consistently with the information available in realistic scenarios.
    \item We analyze well-posedness under partial observations, a setting in which the Schr{\"o}dinger bridge problem may admit multiple optimal solutions.  We characterize the optimal solution set and derive conditions for uniqueness, in terms of the observability of the underlying time-varying linear system.   
    \item We validate the proposed methodology on experimental contamination data that we collected on a laboratory-scale water distribution network at SWIL.
\end{enumerate}

The paper is structured as follows. In \Cref{section:background}, we set the notation and introduce the discrete Schr{\"o}dinger bridge problem. \Cref{sec:problemform} presents the problem formulation and studies existence and uniqueness of the optimal solution. The computational approach, based on the entropic proximal method, is described in \Cref{sec:computational}. \Cref{sec:waternetwork} introduces the application to contamination tracking in water distribution networks, and \Cref{sec:experiments} presents experimental results using data collected at SWIL. The paper concludes in \Cref{sec:conclusion}.

\section{Background}
\label{section:background}
\subsection{Notation}
By $\odot$, $\oslash$, $\exp(\cdot)$, and $\log(\cdot)$ we denote element-wise multiplication, division, exponential, and logarithm of matrices and vectors. The vector of ones $\one_n \in \mathbb{R}^{n\times 1}$ and the identity matrix $I_n \in \mathbb{R}^{n \times n}$ are used, with the dimension omitted when it is clear from the context. The support $\supp(\cdot)$ of a matrix is the set of its non-zero elements. For any indexed quantity $x$, $x_{[i:j]} := \{x_i,\ldots,x_j\}$. Finally, $\mathbb{R}^n_{\geq 0}$ and $\mathbb{R}^n_{> 0}$ denote the non-negative and strictly positive orthants of $\mathbb{R}^n$.

\subsection{The Schr{\"o}dinger bridge problem}
\label{subsect:schrodinger}
We introduce the discrete Schr{\"o}dinger bridge problem, a maximum entropy problem for discrete time and discrete state spaces originally proposed by Erwin Schr{\"o}dinger in the context of diffusion processes~\cite{schrodinger1931umkehrung, schrodinger1932theorie}. In this framework, particle dynamics are modeled as a discrete Markov chain~\cite{haasler2019estimating, pavon2010discrete}. Specifically, consider an ensemble of indistinguishable particles, evolving over a finite set of $n$ states \( X = \{ X_1, X_2, \dots, X_n \} \). 
Denote by  \(q_t\) the state of a generic particle at time \(t\). Its evolution is governed by a row-stochastic transition matrix \(A_t \in \mathbb{R}_{\geq 0}^{n \times n}\), where $
(A_t)_{ij} = \mathbb{P}(q_{t+1}=X_j \mid q_t=X_i)$.
Given the a priori distribution on the path space induced by these matrices $A_t$, new information may become available in the form of marginal distributions. The goal is then to find a path distribution that satisfies the  fixed marginal constraints while remaining as close as possible to the prior distribution in the sense of Kullback-Leibler divergence, defined as follows.
\begin{definition}
Let \( p \) and \( q \) be two nonnegative vectors or matrices of the same dimension, with $\supp{(p)} \subseteq \supp{(q)}$. The normalized Kullback-Leibler (KL) divergence of \( p \) with respect to \( q \), is defined as
\begin{equation}
\label{eq:kl}
\ccD(p | q) := \sum_i \left( p_i \log\left( \frac{p_i}{q_i} \right) - p_i + q_i \right),
\end{equation}
with the convention that \( 0 \log 0 := 0 \). 
\end{definition}
The KL divergence is an example of $\varphi$-divergence (see, e.g., \cite{teboulle1992entropic}), a class of distance-like functions $d_\varphi$ satisfying, for all $p,q \in \mathbb{R}^n_{>0}$,
\[
d_\varphi(p,q)\ge 0, \; \mbox{ and } \quad d_\varphi(p,q)=0 \iff p=q.
\]
Consider two observed marginal distributions $\mu_0, \mu_1 \in \mR^{n}_{\geq 0}$. The $i$-th element $(\mu_t)_i$ denotes the number of particles in state $X_i$ at time $t$.  Given that the number of particles goes to infinity, the discrete particle distributions can be approximated by densities, allowing to consider particles as continuous quantities. The objective is to find a mass transport matrix $M \in \mathbb{R}^{n \times n}_{\geq 0}$ whose entries $(M)_{ij}$ represent the amount of mass transported from state $X_i$ at time $t$ to state $X_j$ at time $t+1$. To ensure consistency with the observed marginals, $M$ is constrained to satisfy $M \one=\mu_0$ and $M^\top \one=\mu_1$.

The likelihood of observing a transition matrix \( M \) in a large system of particles can be approximated using the KL divergence, as the solution of 
\begin{equation} \label{eq:sb_1T}
\begin{aligned}
	\min_{M \in \mR^{n\times n}_{\geq 0}} \ &  \ccD( M \,|\,\diag(\mu_{0})A) \\
	\text{subject to } \  &  M \one = \mu_{0}  , \ \  M^\top \one = \mu_{1} .
 \end{aligned}
\end{equation}
The right-hand side of the KL divergence in \eqref{eq:sb_1T}, which is the prior on particle evolution, is given by the state transition matrix $A$ rescaled by the initial mass distribution $\mu_0$.

Problem~\eqref{eq:sb_1T} can also be interpreted as an entropy-regularized optimal transport problem. The classical discrete optimal transport problem consists of finding a coupling \( M \in \mathbb{R}^{n \times n}_{\geq0} \) that transports a source distribution \( \mu_0 \) to a target distribution \( \mu_1 \) at minimal total cost, where the cost of moving mass from state \( i \) to state \( j \) is given by a cost matrix \( C \in \mathbb{R}^{n \times n}_{\geq 0} \). This leads to the following linear program:
\begin{equation} \label{eq:basicot}
	\begin{aligned}
		\min_{M \in \mathbb{R}^{n \times n}_{\geq 0 }} \quad & \langle C, M \rangle \\
		\text{subject to} \quad & M \one = \mu_0, \quad M^\top \one = \mu_1,
	\end{aligned}
\end{equation}
where \( \langle C, M \rangle = \sum_{i,j} C_{ij} M_{ij} \) denotes the standard Frobenius inner product. 
The problem can be regularized by adding an entropy term \( \epsilon \ccD(M) \), where \( \epsilon > 0 \) is a small regularization parameter and \( \ccD(M) \) denotes the Kullback--Leibler divergence from \( M \) to the uniform coupling, i.e., \( \ccD(M | \one \one^\top ) \). This modification replaces the linear objective with a strictly convex one and enables efficient solution methods based on dual coordinate ascent, such as the Sinkhorn algorithm~\cite{cuturi2013sinkhorn}. 
By defining the kernel matrix \( K := \exp(-C/\epsilon) \), the entropy-regularized transport problem is equivalent to:
\begin{equation} \label{eq:entropicOT}
\begin{aligned}
	\min_{M \in \mathbb{R}_{\geq 0}^{n \times n}} \ & \ccD(M | K) \\
	\text{subject to} \quad & M \one = \mu_0, \quad M^\top \one = \mu_1.
\end{aligned}
\end{equation}
 For a suitable choice of cost matrix and regularization parameter, entropy-regularized optimal transport and the Schr{\"o}dinger bridge problem coincide, and the latter benefits from the same algorithmic tools and scalability as entropic optimal transport.

The Schr{\"o}dinger bridge problem has also been extended to Markov chains of length~$\T$. 
The most widely studied formulation assumes that only the initial and final marginal distributions, 
$\mu_0$ and $\mu_\T$, at times $0$ and $\T$, are fixed, while the intermediate marginals 
$\mu_t$, for $t=1,\ldots,\T-1$, are treated as unknown variables to be determined as part of the 
optimization problem \cite{pavon2010discrete, haasler2019estimating, haasler2021tree}. 
This formulation then seeks the most likely evolution connecting the prescribed endpoints under a given prior Markov dynamics $A_t$. The bridge connecting the initial and final distributions can then be found as the solution to
\begin{equation} \label{eq:sbp}
\begin{aligned}
	\min_{\substack{M_{[0:\T-1]}, \\ \mu_{[1:\T-1]}}} \ 
	& \sum_{t=0}^{\T} \ccD\!\left( M_t \,\middle|\, \diag(\mu_{t})A_t \right) \\
	\text{subject to } \ 
	& M_t^\top \one = \mu_{t}, \qquad \ \: t = 0,\ldots,\T-1, \\
	& M_{t-1}^\top \one = \mu_{t}, \qquad t = 1,\ldots,\T .
\end{aligned}
\end{equation}

In this paper, we study a variant of \eqref{eq:sbp} in which, instead of fixing the full marginals only at the initial and final times, partial observations of the marginals are available at every time step and only on a subset of the states.

\section{Problem Formulation 
and Properties}
\label{sec:problemform}
We consider a system setup as described in the background section, with $n$ states and transition probabilities given by the matrices $A_t$, $t=0,\ldots,\T-1$.
Let the mass transition matrices be $M_t$, where $(M_t)_{ij}$ denotes the amount of mass moving from state $i$ to state $j$ between time $t$ and $t+1$. 
Further, we assume that partial observation, corresponding to a subset of the states, are available over the $\T$ discrete time steps. These observed states are  indexed by the set \( \pi \subseteq \{1, \ldots, n\} \) with $|\pi|=k\le n$, and the observations are collected into a vector $ \rho_t \in \mathbb{R}^k $,  for $t=0,\ldots,\T$.
To formalize the observation model, we define the matrix $C \in \{0,1\}^{k \times n} $ by $C= \left[ e_{\pi_1},\, e_{\pi_2},\, \ldots,\, e_{\pi_k} \right]^\top$
where $ e_i \in \mathbb{R}^n $ is the $ i $-th standard basis vector. The rows of the matrix $C$ thus extract the observed states.
Complementarily, we define $ \overline{C} \in \{0,1\}^{(n-k) \times n}$ as $
\overline{C} = \left[ e_{j_1},\, e_{j_2},\, \ldots,\, e_{j_{n-k}} \right]^\top $,
where \( \{j_1, \ldots, j_{n-k}\} = \{1, \ldots, n\} \setminus \pi \), so that $\overline{C} $ selects the unobserved states.
Together, $C$ and $ \overline{C}$ partition the state space into observed and unobserved components. In particular $ C^\top C+\overline{C} ^\top \overline{C}=I_n$.

We now formulate the problem of minimizing the KL divergence over time of the mass transportation $M_t$, with respect to the prior $A_t$, while respecting partial measurements and conservation of mass at each time point
\begin{subequations}
\label{eq:primal}
\begin{align}
\min_{M_{[0:\T-1]}} \quad &  \sum_{t=0}^{\T-1} \ccD( M_t \,|\,\diag(M_t \one)A_t)  \\
\text{subject to } \  &  C M_t \one = \rho_{t}, \qquad  \mbox{for } \  t=0,\dots,\T-1, \label{eq:obst}\\
\ &  CM_{\T-1}^\top \one = \rho_\T, \label{eq:finalT}\\
& M_t \one= M_{t-1}^\top  \one,  \ \  \mbox{for } \  t=1,\dots,\T-1. \label{eq:matching}
\end{align}
\end{subequations}
Here, the constraints \eqref{eq:obst} and \eqref{eq:finalT} ensure that the transport matrices $M_t$ are consistent with the partial observations $\rho_t$, whereas \eqref{eq:matching} enforces 
consistency of the mass transitions over time.
\begin{remark}
We emphasize that the key difference between \eqref{eq:primal} and Problem (4) in \cite{mascherpa2023estimating} lies in the constraint $C M_0 \one = \rho_{0}$. The former assumes that at time $t=0$ only partial measurements are available. On the contrary, the latter assumed complete knowledge of the initial state. In relation to the tracking of contaminants in a water networks, this corresponds to the knowledge of the source of pollution, whose identification in this paper becomes part of the problem. 
\end{remark}

We introduce a regularity assumption on \eqref{eq:primal} which ensures existence of primal and dual solutions and rules out instances where the constraints force additional zero entries in the transport, despite the corresponding transition being allowed by the prior.
\begin{assumption}[Regularity]
\label{as:reg}
There exists a feasible solution $M$ to \eqref{eq:primal} such that $(M_t)_{ij}>0$ whenever $(A_t)_{ij}>0$.
\end{assumption}
If some entries of $M$ are forced to be zero by the observations (e.g., when $(\rho_t)_i=0$ implies $(CM_t \one )_{i}=0$), we may fix these entries to zero and work with the equivalent reduced problem on the resulting feasible set. 
The following result holds.
\begin{proposition}
    \label{prop:existence}
    Assume that problem \eqref{eq:primal} is feasible. Then a minimizer exists.
\end{proposition}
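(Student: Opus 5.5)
The plan is the direct method: take a minimizing sequence, extract a convergent subsequence, and pass to the limit using lower semicontinuity. Compactness is the real issue, because the total mass is not fixed by \eqref{eq:primal}.

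\emph{Reformulating the objective.} Since each $A_t$ is row stochastic, $\sum_j (M_t\one)_i (A_t)_{ij} = (M_t\one)_i = \sum_j (M_t)_{ij}$. Hence the linear terms in \eqref{eq:kl} cancel, and
\[
f(M) := \sum_{t=0}^{\T-1}\ccD\bigl(M_t\,|\,\diag(M_t\one)A_t\bigr) = \sum_{t=0}^{\T-1}\sum_{i,j} (M_t)_{ij}\log\frac{(M_t)_{ij}}{(M_t\one)_i (A_t)_{ij}} .
\]
This is a sum of perspective functions of the (convex) relative entropy. So $f$ is convex, lower semicontinuous, nonnegative and positively $1$-homogeneous, with $f(M)=+\infty$ if some $(M_t)_{ij}>0$ where $(A_t)_{ij}=0$. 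The feasible set $P$ is a nonempty polyhedron in the nonnegative orthant, and $\inf_P f\in[0,\infty)$ when some feasible point has finite cost. If no feasible point has finite cost, every feasible point is a minimizer.

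\emph{Bounded case and recession directions.} Let $M^k\in P$ with $f(M^k)\to\inf_P f$. If $(M^k)$ is bounded, a subsequence converges to some $M^\star\in P$ because $P$ is closed, and lower semicontinuity gives $f(M^\star)\le\inf_P f$, so $M^\star$ is a minimizer. If $(M^k)$ is unbounded, normalize: $D^k=M^k/\|M^k\|$ has a limit point $D\neq 0$ in the recession cone
\[
R=\{D\ge 0:\ CD_t\one=0,\ CD_{\T-1}^\top\one=0,\ D_t\one=D_{t-1}^\top\one\}.
\]
By homogeneity and lower semicontinuity, $f(D)\le\liminf_k f(M^k)/\|M^k\|=0$, so $f(D)=0$. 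Thus $D$ is unobserved mass moving exactly according to the prior: $D_t=\diag(D_t\one)A_t$ and it never visits $\pi$. Such flows cost nothing, and they are exactly what makes the feasible set (and the sublevel sets of $f$) unbounded.

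\emph{Main obstacle: making the sequence bounded.} The hard step is replacing the minimizing sequence by a bounded one. My plan is to use the structure of zero-cost directions. Let $U$ be the set of state--time pairs reachable by such prior-following, never-observed flows. Split each $M^k$ into the mass carried on $U$ and the rest, and show that the part on $U$ can be truncated without increasing $f$. For the truncation, I would try to use joint convexity of the perspective along the affine segment from $M^k$ to $M^k-sD$, with $s$ the largest value keeping nonnegativity. This should remove a unit of the divergent component while keeping feasibility, and then induct on the number of recession directions via Minkowski--Weyl $P=Q+\mathrm{cone}(D^1,\dots,D^m)$. The delicate point is the sign: convexity of $s\mapsto f(M+sD)$ together with $f(D)=0$ only shows that adding $D$ does not increase cost. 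So I expect to need the explicit form $D_t=\diag(D_t\one)A_t$ to show the cost is actually constant along $\pm D$ wherever the subtraction stays in the domain. If this holds, Rockafellar's result (closed proper convex function on a polyhedron, constant along all common recession directions) applies directly and yields attainment.
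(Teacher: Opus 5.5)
\textbf{The step your argument hinges on is false.} Your preliminary steps are fine: the perspective form of the objective, and the normalization argument giving $D\neq 0$ in the recession cone with $f(D)=0$, hence $D_t=\diag(D_t\one)A_t$ and $D$ never observed. The problem is the step you flag as delicate and hope to close: $f$ is \emph{not} constant along $\pm D$, and the explicit form of $D$ does not help. Since $s\mapsto f(M+sD)$ is convex with asymptotic slope $f(D)=0$, it is nonincreasing, and in general it is strictly decreasing, with its infimum approached only as $s\to\infty$.

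For instance, take $n=3$, $\T=1$, $C=[1\ 0\ 0]$, $\rho_0=\rho_1=1$, and $A$ with rows $(1,0,0)$, $(0,\tfrac12,\tfrac12)$, $(0,0,1)$. Let $M_0$ be the feasible point with rows $(1,0,0)$, $(0,1,0)$, $(0,0,0)$, and let $D$ have as its only nonzero row the second one, equal to $(0,\tfrac12,\tfrac12)$. Then
\[
f(M+sD)=\Bigl(1+\frac s2\Bigr)\log\frac{2+s}{1+s}+\frac s2\log\frac{s}{1+s},
\]
which equals $\log 2$ at $s=0$ and decreases strictly to $0$. This includes points $M+s_0D$ from which subtracting $D$ stays feasible. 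So truncating along $-D$ increases the cost. Rockafellar's criterion also fails for the original problem, because $D$ is a common recession direction along which the objective is not constant. (A minimizer exists here, e.g.\ with second row $(0,x,x)$, but not on that ray.)

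\textbf{The missing idea is to change the never-observed part by replacement, not by translation.} This is what the paper does. Your time-indexed set $U$ is a good choice. It contains no observed pair, and it is forward invariant: $(i,t)\in U$ and $(A_t)_{ij}>0$ imply $(j,t+1)\in U$. Given a feasible $M$ of finite cost, replace each row $i$ of $M_t$ with $(i,t)\in U$ by the prior-following row $r_{t,i}\,e_i^\top A_t$. Choose the row sums $r_{t,i}$ forward in time from the new column sums of $M_{t-1}$, so the matching constraints hold. Rows indexed by $U$, old and new, only send mass into $U$. Hence all column sums outside $U$, all observation constraints, and all rows outside $U$ are unchanged, while the rows in $U$ now cost zero. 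So the cost does not increase.

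The paper then minimizes over the polyhedron $\mathcal F^\star$ on which these rows are prior-following, which is a linear constraint. There the objective depends only on rows outside $U$, which zero-cost recession directions leave untouched. So the objective \emph{is} constant along those directions, and \cite[Thm.~27.3]{Rockafellar1970} gives attainment.

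Alternatively, you can stay within your direct method: also set $r_{0,i}=0$ whenever $(i,0)\in U$. The modified minimizing sequence is then bounded. A normalized limit $D$ would be a zero-cost never-observed flow, so $\supp(D_0\one)\subseteq\{i:(i,0)\in U\}$, where every iterate has zero mass. Hence $D_0\one=0$, and $D_{t+1}\one=A_t^\top D_t\one$ forces $D=0$, contradicting $\|D\|=1$. With this, the Minkowski--Weyl induction is unnecessary.
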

\begin{proof}
    See {Appendix~\ref{app:exist}}.
\end{proof}


\subsection{Duality}

Next we derive the corresponding dual problem. The following lemma will be useful.
\begin{lemma} 
\label{lemma:dualconstr}
    Let $\xi \in \mR^n$, and $a \in \mR^n_{\geq 0}$ satisfying $a^\top\one= \one$. Assume also that $\supp(m) \subseteq \supp(a)$. The problem
    \[\inf_{m\in \mR_+^n} \sum_{j=1}^n \Big( m_j \log\frac{m_j}{\bar m a_j} {\Big)}-\langle m,\xi \rangle,
    \]
with  $\bar m=\ett^\top m$, has an optimal solution if and only if $\sum_j a_j\exp(\xi_j)\le 1$. In this case, the minimum value is 0 and the set of optimal solutions is given by
\begin{align*}
&\{0\}  && \mbox{ if }\quad     \sum_{j=1}^n a_j\exp(\xi_j)< 1,\\
&\{m=\alpha a \odot \exp(\xi)\mid {\alpha \geq 0}\} && \mbox{ if }\quad     \sum_{j=1}^n a_j\exp(\xi_j)= 1.
\end{align*}
If $\sum_j  a_j \exp(\xi_j)> 1$, then the objective value tends to $-\infty$ for $m^{(\alpha)}=\alpha a \odot  \exp(\xi)$  as $\alpha \to \infty$.
\end{lemma}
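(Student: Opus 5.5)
Note first that the objective $f(m)=\sum_j m_j\log\frac{m_j}{\bar m a_j}-\langle m,\xi\rangle$ is positively homogeneous of degree one: for $\alpha>0$ we have $\bar m$ scaling by $\alpha$, so the ratio $m_j/(\bar m a_j)$ is unchanged and $f(\alpha m)=\alpha f(m)$. Hence the infimum over the cone $\mR^n_{\ge 0}$ is either $0$ (attained at $m=0$, where $f(0)=0$ by the convention $0\log 0=0$) or $-\infty$. Which case occurs is decided by the sign of $f$ on the slice $\{m\ge 0:\bar m=1\}$. So I will reduce the problem to that slice.

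On the slice $\bar m=1$, $m$ is a probability vector with $\supp(m)\subseteq\supp(a)$, and
\[
f(m)=\sum_j m_j\log\frac{m_j}{a_j}-\sum_j m_j\xi_j=\sum_j m_j\log\frac{m_j}{a_j e^{\xi_j}} .
\]
Set $Z:=\sum_j a_j\exp(\xi_j)>0$ (positive because $a$ sums to one) and define the probability vector $p:=a\odot\exp(\xi)/Z$. Then $f(m)=\sum_j m_j\log(m_j/p_j)-\log Z=\mathrm{KL}(m\,\|\,p)-\log Z$, where $\mathrm{KL}$ is the unnormalized-free (probability) relative entropy. This equals $\ccD(m|p)-\log Z$, since both $m$ and $p$ have unit mass. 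By Gibbs' inequality $\mathrm{KL}(m\|p)\ge 0$, with equality iff $m=p$. Therefore
\[
\min_{\bar m=1}f=-\log Z,
\]
attained uniquely at $m=p$.

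By homogeneity the three cases follow.

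\begin{itemize}
\item If $Z<1$, then $f>0$ on the slice, so $f(m)=\bar m\,f(m/\bar m)>0$ for every $m\neq 0$. The infimum $0$ is attained only at $m=0$.
\item If $Z=1$, then $f\ge 0$ on the slice with equality exactly at $p=a\odot\exp(\xi)$. Hence $f\ge 0$ everywhere, and the zero set is $\{0\}\cup\{\alpha p:\alpha>0\}=\{\alpha a\odot\exp(\xi):\alpha\ge 0\}$.
\item If $Z>1$, then $f(\alpha a\odot\exp(\xi))=\alpha Z f(p)=-\alpha Z\log Z\to-\infty$ as $\alpha\to\infty$, so no minimizer exists.
\end{itemize}

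The main point of care, rather than a real obstacle, is justifying homogeneity and continuity at $m=0$. Since $\sum_j m_j\log(m_j/\bar m)$ is bounded in absolute value by $\bar m\log n$, $f(m)\to 0$ as $m\to 0$, which makes the value $f(0)=0$ consistent. The support assumption ensures that no $\log(\cdot/0)$ terms appear.
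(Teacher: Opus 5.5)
Your proof is correct, and it takes a different route from the paper. The paper gives no direct argument. It rewrites the objective on $\supp(a)$ as $\sum_j m_j\log(m_j/\bar m)-\sum_j m_j\lambda_j$ with $\lambda_j:=\xi_j+\log a_j$, and then invokes \cite[Lemma~2]{mascherpa2025convex}. That lemma treats the unweighted case under the condition $\sum_j\exp(\lambda_j)\le 1$; the weights $a$ are simply absorbed into the multipliers.

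You instead give a self-contained argument. Positive homogeneity of degree one reduces everything to the sign of the objective on the slice $\bar m=1$. There it equals $\ccD(m|p)-\log Z$, with $p=a\odot\exp(\xi)/Z$ the Gibbs distribution, and Gibbs' inequality settles all three cases at once.

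The paper's route is shorter but depends on an external result. Yours is complete on its own and explains structurally why the optimal value can only be $0$ or $-\infty$ and why the optimal set is a ray. It also shows that the normalization $\sum_j a_j=1$ is not needed for the lemma itself: only $a\neq 0$ is used, to get $Z>0$. The normalization matters only when the lemma is applied to $\ccD(m|(\one^\top m)a)$ in the duality proof, where it makes the terms $-\bar m+\bar m\sum_j a_j$ cancel.
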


\begin{proof}
Follows from \cite[Lemma~2]{mascherpa2025convex} after absorbing the weights \(a\) into the multipliers, i.e., applying that result to \(\lambda_j:=\xi_j+\log a_j\) on the index set \(\supp(a)\).
\end{proof}
The Lagrangian dual problem can then be formulated as in the following proposition
\begin{theorem}
\label{prop:duality}
Under \Cref{as:reg}, a dual formulation of \eqref{eq:primal} is given by
\begin{equation}
\begin{aligned}
\label{eq:dualnonlinear}
   \max_{\lambda_{[0:\T]}} \quad \; & \ \sum_{t=0}^\T \lambda_t^\top\rho_t   \\
\text{subject to} \ & \diag(u_0)A_0  \diag(u_1)A_1\cdots A_{\T-1}u_{\T}  \leq  \one,
\end{aligned}
\end{equation}
with $u_t:=\exp(C^\top \lambda_t)$, $\lambda_t \in \mR^k$, for $t=0,\ldots, \T$. Moreover, the maximum in \eqref{eq:dualnonlinear} is attained, and for any maximizer $\lambda_{[0:\T]}$ there exist vectors $w_1,\ldots,w_{\T-1}\in\mR^n_{>0}$ with $w_0:=\one$ and $w_\T:=u_\T$ such that every primal optimal solution satisfies, for all $t=0,\ldots,\T-1$,
\begin{equation}
\label{eq:primal_scaling_compact}
M_t^*=\diag(M^*_t\one)\,\diag(u_t\oslash w_t)\,A_t\,\diag(w_{t+1}).
\end{equation}
\end{theorem}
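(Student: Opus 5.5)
We take the Lagrangian dual of \eqref{eq:primal} and solve the resulting inner minimization one time step at a time with Lemma~\ref{lemma:dualconstr}.

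\textbf{Multipliers.} Introduce $\lambda_t\in\mR^k$ for the observation constraints \eqref{eq:obst}, $t=0,\ldots,\T-1$, and $\lambda_\T$ for \eqref{eq:finalT}. Introduce $\nu_t\in\mR^n$ for the matching constraints \eqref{eq:matching}, $t=1,\ldots,\T-1$. Regroup the linear terms of the Lagrangian row by row for each $M_t$. The mass of row $i$ of $M_t$ sees the coefficient $(C^\top\lambda_t+\nu_t)_i$, with $\nu_0:=0$. An entry $(M_t)_{ij}$ also sees $-(\nu_{t+1})_j$ for $t\le\T-2$, and $(C^\top\lambda_\T)_j$ for $t=\T-1$.

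\textbf{Inner problem.} The objective $\ccD(M_t\mid\diag(M_t\one)A_t)$ separates over rows. Its $-p_i+q_i$ terms cancel, since $\one^\top M_t^{(i)}=\bar m$ and $A_t$ is row-stochastic. Each row $i$ is therefore exactly the problem of Lemma~\ref{lemma:dualconstr} with $a=(A_t)_{i\cdot}$ and a shifted multiplier $\xi$. Row $i$ carries the row-mass term $(C^\top\lambda_t+\nu_t)_i\,\bar m$. Since $\langle m,\one\rangle=\bar m$, this term can be absorbed into $\xi$ as a constant shift.

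\textbf{Dual constraint.} For fixed multipliers, the Lagrangian infimum is $\sum_t\lambda_t^\top\rho_t$ if every row condition holds, and $-\infty$ otherwise. Define $w_t:=\exp(-\nu_t)$ for $1\le t\le\T-1$, $w_0:=\one$ and $w_\T:=u_\T$. With this choice the row conditions read
\[
\diag(u_t\oslash w_t)\,A_t\,w_{t+1}\le\one,\qquad t=0,\ldots,\T-1 .
\]
Eliminating the free variables $w_1,\ldots,w_{\T-1}$ gives \eqref{eq:dualnonlinear}. Feasibility of the chained inequality implies feasibility of the split system: take $w_t:=\diag(u_t)A_t\cdots A_{\T-1}u_\T$, so the constraint holds with equality at each intermediate stage. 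Conversely, chaining the stage inequalities and using monotonicity of the nonnegative matrices $A_t$ recovers the product constraint. The dual objective is independent of the $w_t$, so this elimination loses nothing.

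\textbf{Strong duality and attainment.} The primal is convex: it is a sum of perspective-type functions, i.e.\ conditional relative entropies. Assumption~\ref{as:reg} provides a point in the relative interior of the domain satisfying the linear constraints. Slater's condition for linear constraints (Rockafellar, Cor.~28.2) then gives zero duality gap and existence of dual optimal $(\lambda,\nu)$. I expect this step to be the main obstacle, because the objective is not strictly convex and $\dom$ depends on the support of $A_t$. The fix is to restrict the variables to $\supp(A_t)$, and to check relative interiority there using the regularity assumption.

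\textbf{Primal form.} Take any maximizer $\lambda$ and corresponding $\nu$, hence $w$. Every primal optimum minimizes the Lagrangian, so each row of $M_t^*$ is optimal in its Lemma~\ref{lemma:dualconstr} subproblem.

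By the lemma, each row is either zero or of the form $\alpha\,a\odot\exp(\xi)$. In both cases it equals $(M^*_t\one)_i\,(u_t/w_t)_i\,(A_t)_{ij}(w_{t+1})_j$. When the row constraint is strict the row mass is $0$, so the identity holds trivially. This gives \eqref{eq:primal_scaling_compact}.

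One technicality remains: the $\nu$ supplied by duality need not correspond to $w_t>0$ satisfying equality wherever $M_t^*$ has mass. This is fine, since the lemma covers the inequality case. Positivity of the $w_t$ is automatic because $w_t=\exp(-\nu_t)$.
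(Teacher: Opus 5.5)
Your proposal is correct and follows essentially the same route as the paper. Both apply Lemma~\ref{lemma:dualconstr} row-wise to the Lagrangian, eliminate the matching multipliers via $w_t=u_t\odot(A_t w_{t+1})$ (strictly positive since $A_t$ is row-stochastic), get strong duality and dual attainment from Rockafellar's Section~28 under Assumption~\ref{as:reg}, and read off \eqref{eq:primal_scaling_compact} from the row-wise structure of the Lagrangian minimizers. The differences are cosmetic: your $w_t=\exp(-\nu_t)$ only reflects the opposite sign convention for the matching multipliers, and you are more explicit than the paper about restricting to $\supp(A_t)$ to obtain a relative-interior feasible point (the precise reference is Rockafellar's Theorem~28.2, or Corollary~28.2.2 for linear constraints).
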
 \vspace{5pt}
\begin{proof}
Introduce the Lagrange multipliers $\lambda_t$, $t=0,\ldots,\T$, for the observation constraints \eqref{eq:obst}, \eqref{eq:finalT}, and $\nu_t$ for the matching constraints \eqref{eq:matching}. 
The Lagrangian is
\begin{align*}
\mathcal{L}(M,\lambda,\nu)
&\!= \! \sum_{t=0}^{\T-1}\ccD\!\left(M_t\,\middle|\,\diag(M_t\one)A_t\right) \!
+ \! \sum_{t=0}^{\T-1} \! \lambda_t^\top \! (\rho_t \!- \! CM_t\one) \\&
+\lambda_\T^\top(\rho_\T-CM_{\T-1}^\top\one)
+\sum_{t=1}^{\T-1}\nu_t^\top(M_t\one-M_{t-1}^\top\one).
\end{align*}
For each fixed $(t,i)$, the terms in $\mathcal L(M,\lambda,\nu)$ depending on the $i$-th row of $M_t$ are
\(
\ccD\!\left(m\,\middle|\,(\one^\top m)a\right)-\langle m,\xi\rangle,
\)
where $m$ denotes the $i$-th row of $M_t$, $a$ the $i$-th row of $A_t$, and
\[
\xi=(C^\top\lambda_t)_i\one-(\nu_t)_i\one+\nu_{t+1},
\]
with $\nu_0:=0$ and $\nu_\T:=C^\top\lambda_\T$. Hence \Cref{lemma:dualconstr} applies row-wise. Therefore, the dual function is finite if and only if
\[
u_t \oslash w_t\odot(A_t w_{t+1})\le \one,\qquad t=0,\ldots,\T-1,
\]
where $u_t=\exp(C^\top\lambda_t)$ and $w_t=\exp(\nu_t)$, with $w_0:=\one$ and $w_\T:=u_\T$.  
In that case each row subproblem has infimum \(0\), so
\(
\inf_{M\ge0}\mathcal L(M,\lambda,\nu)=\sum_{t=0}^{\T}\lambda_t^\top\rho_t,
\)
and otherwise it equals $-\infty$.
Moreover, Lemma~\ref{lemma:dualconstr} yields that,
for each $(t,i)$, any minimizing $i$-th row is either $0$ (if $\frac{(u_t)_i}{(w_t)_i}(A_t w_{t+1})_i<1$), or it satisfies
\begin{equation}
\label{eq:row-minimizer-dual-form}
(M_t^*)_{ij}
=(M_t^*\one)_i\,\frac{(u_t)_i}{(w_t)_i}(A_t)_{ij}(w_{t+1})_j,
\qquad \forall j,
\end{equation}
when $\frac{(u_t)_i}{(w_t)_i}(A_t w_{t+1})_i=1$.
Collecting \eqref{eq:row-minimizer-dual-form} over $i$ yields \eqref{eq:primal_scaling_compact}, noting that if the minimizing $i$-th row is $0$, then $(M_t^*\one)_i=0$ and the $i$-th row of
\eqref{eq:primal_scaling_compact} is identically zero as well.
Therefore the dual problem can be written as
\begin{align*}
\sup_{\lambda_{[0:\T]},\,\nu_{[1:\T-1]}}  
\ &\sum_{t=0}^\T \lambda_t^\top\rho_t
 \\ 
 \quad \quad  \text{subject to}\quad& 
 u_t\odot(A_t w_{t+1})\le w_t,\ \ t=0,\ldots,\T-1.\ \ 
\end{align*}
Iterating the inequalities gives
\[
\one  \ge \diag(u_0) A_0\diag(u_1)A_1\diag(u_2)\cdots A_{\T-1}u_\T,
\]
and the variables $w_{1},\ldots,w_{\T-1}$  can be removed, yielding \eqref{eq:dualnonlinear}. 
Finally, under Assumption~\ref{as:reg} the supremum is attained (and there is no duality gap) by \cite[Thm.~28.2]{Rockafellar1970}. For a maximizer $\lambda$, one admissible choice of $w$ is obtained by setting $w_\T:=u_\T$ and $w_t:=u_t\odot(A_t w_{t+1})$ for $t=\T-1,\ldots,1$.
\end{proof}

Problem \eqref{eq:primal} may admit multiple solutions. We next establish conditions for uniqueness and we characterize the set of optimizer.
Exploiting the optimality condition \eqref{eq:primal_scaling_compact},  the evolution of the distribution $\mu_{t+1}=(M_t)^\top \one $ can be written as the following linear system
\begin{equation}
\label{eq:systemM}
    \begin{aligned}
        \mu_{t+1}&=\mathcal{A}^\top_t \mu_t ,\\
        \rho_t&=C\mu_t,
    \end{aligned}
\end{equation}
where
\begin{equation}
\label{eq:mathcalA}
    \mathcal{A}_t = \diag\left(u_t \oslash w_t \right) A_t \diag(w_{t+1}),
\end{equation}
for dual optimal $u_t=\exp(C^\top\lambda_t)$ and $w_t=\exp(\nu_t)$. 
The problem of uniquely identifying the initial state $\mu_0$ from the output $\rho_0,\ldots,\rho_\T$ is then the ($\T+1$)-step observability of the discrete time varying linear system \eqref{eq:systemM}.

\subsection{Uniqueness}

A key issue is that, since the optimal transport plan $M^*$ is generally not unique, different optimal solutions may induce different state-transition matrices $\mathcal{A}_t$ and, subsequently, different linear systems of the form~\eqref{eq:systemM}. The next result shows that this ambiguity does not affect the associated unobservable subspace.
\begin{proposition}
\label{prop:uniqueness}
Under \Cref{as:reg}, the system \eqref{eq:systemM} is observable if and only if the system
\begin{equation}
\label{eq:systemA}
\begin{aligned}
\mu_{t+1}&=A_t^\top \mu_t,\\
\rho_t&=C\mu_t,
\end{aligned}
\end{equation}
is observable. Moreover, the unobservable subspaces of the two systems coincide.
\end{proposition}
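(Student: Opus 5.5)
The plan is to relate the two systems \eqref{eq:systemM} and \eqref{eq:systemA} by a time-varying diagonal change of coordinates. I then show that, along any trajectory producing zero output, the extra diagonal factor $\diag(u_t)$ acts as the identity. Throughout I fix a dual maximizer $\lambda_{[0:\T]}$ and the associated $w_1,\dots,w_{\T-1}\in\mR^n_{>0}$, with $w_0=\one$ and $w_\T=u_\T$. These exist by \Cref{prop:duality} under \Cref{as:reg}. Since \eqref{eq:mathcalA} is built from these quantities, the argument will hold for every admissible choice of them. Hence the resulting unobservable subspace does not depend on which optimal solution, or which dual multipliers, define $\mathcal A_t$.

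First I transpose \eqref{eq:mathcalA}, which gives
\[
\mathcal A_t^\top=\diag(w_{t+1})\,A_t^\top\,\diag(u_t)\,\diag(w_t)^{-1}.
\]
I introduce the rescaled state $\nu_t:=\diag(w_t)^{-1}\mu_t$. This is well defined because $w_t>0$, and $\nu_0=\mu_0$ because $w_0=\one$. In these coordinates \eqref{eq:systemM} becomes
\[
\nu_{t+1}=A_t^\top\diag(u_t)\,\nu_t,\qquad \rho_t=C\diag(w_t)\,\nu_t .
\]
Because $\diag(w_t)$ is diagonal and positive, $C\diag(w_t)\nu=0$ holds if and only if $C\nu=0$. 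This equivalence holds since $C$ merely selects coordinates.

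The key structural fact is that $u_t=\exp(C^\top\lambda_t)$ has entries equal to $1$ on all unobserved states. Using $C^\top C+\overline C^\top\overline C=I$, this means $\diag(u_t)=\overline C^\top\overline C+C^\top\diag(\exp(\lambda_t))C$. Consequently, $C\nu=0$ implies $\diag(u_t)\nu=\nu$.

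With this in hand, I argue by induction on $t$ that the zero-output initial states are the same for both systems, over the horizon $t=0,\dots,\T$ used in the statement. Let $\mu_0$ produce zero output in \eqref{eq:systemM}. Suppose $\nu_s$ coincides with the trajectory $\tilde\mu_s=A_{s-1}^\top\cdots A_0^\top\mu_0$ of \eqref{eq:systemA} for all $s\le t$. The zero output gives $C\nu_t=0$, hence $\diag(u_t)\nu_t=\nu_t$. Therefore
\[
\nu_{t+1}=A_t^\top\nu_t=A_t^\top\tilde\mu_t=\tilde\mu_{t+1},
\]
and consequently $C\tilde\mu_t=C\nu_t=0$ for every $t$. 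The converse direction is symmetric. Starting from $\tilde\mu$ with $C\tilde\mu_t=0$ for all $t$, I show inductively that $\nu_t=\tilde\mu_t$, which makes the output of \eqref{eq:systemM} vanish.

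Hence the two unobservable subspaces coincide as sets, and they are linear subspaces by linearity of each system. Observability is exactly the statement that this common subspace is $\{0\}$, so one system is observable if and only if the other is. The only real subtlety, and the step I would check most carefully, is that the coordinate change is legitimate. This needs strict positivity of every $w_t$, which \Cref{prop:duality} guarantees, together with $w_0=\one$ so that the initial states of the two systems agree. The fact that $u_t$ equals one off the observed set is what makes the induction close.
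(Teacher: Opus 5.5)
Your proof is correct. It rests on the same two facts as the paper's proof in Appendix~\ref{app:proof1}. First, the $w_t$-scalings telescope: your change of coordinates $\nu_t=\diag(w_t)^{-1}\mu_t$ is the paper's cancellation of $\diag(w_t)^{-1}\diag(w_t)$ together with its removal of the left factor $C\diag(w_{r-1})$. Second, $\diag(u_t)$ acts as the identity on the unobserved coordinates, which is the paper's identity $D_t=D_tQ+P$.

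Where you differ is in how the argument is organized. The paper introduces an auxiliary system with $\overline A_t^\top=A_t^\top\overline C^\top\overline C$. It then proves $\ker(\mathcal O_r)=\ker(\tilde{\mathcal O}_r)=\ker(\overline{\mathcal O}_r)$ by induction on the horizon $r$. Each step expands $CA_{r-2}^\top(P+Q)\cdots A_0^\top(P+Q)x$ into $2^{r-1}$ terms and shows, via the rightmost $Q$ in each term, that every term containing a $Q$ vanishes by the lower-order kernel conditions.

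You instead compare the two systems directly along a single zero-output trajectory. Since $C\nu_t=0$ at every step, $\diag(u_t)\nu_t=\nu_t$, so the rescaled trajectory of \eqref{eq:systemM} coincides with the trajectory of \eqref{eq:systemA}. This removes both the intermediate system and the term-by-term expansion. It also makes visible that only positivity of the $w_t$ and the form $u_t=\exp(C^\top\lambda_t)$ are used; dual optimality matters only in that it makes $\mathcal A_t$ well defined.

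The paper's route buys a statement phrased directly in terms of kernels of observability matrices, which matches the rank test on $\mathcal O_{\T+1}$ used afterwards. It also gives the extra characterization through $\overline{\mathcal O}_r$: the unobservable subspace depends only on the dynamics propagated from unobserved states. Your argument reaches the same conclusion more economically, because the zero-output initial states are exactly $\ker(\mathcal O_{\T+1})$.
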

\begin{proof}
See {Appendix~\ref{app:proof1}}.
\end{proof}
As a consequence, observability can be assessed solely from the prior flows $A_t$ and the matrix $C$, allowing the uniqueness of the primal solution to be determined \emph{a priori} with respect to its computation.
For \eqref{eq:systemA}, observability holds if and only if $\rank(\mathcal O_{\T+1})=n$ \cite[Theorem~4]{weiss1972controllability}, where
\begin{equation}
\label{eq:observabilitymatrix}
    \mathcal O_{\T+1}=
\begin{bmatrix}
C\\
CA_0^\top\\
\vdots\\
CA_{\T-1}^\top\cdots A_0^\top
\end{bmatrix}.
\end{equation}

The kernel $\ker(\mathcal O_{\T+1})$ is the unobservable subspace and describes perturbation directions of the initial state that cannot be detected from $\rho_{[0:\T]}$.

Given \Cref{prop:uniqueness}, starting from any optimal solution we can use $\ker(\mathcal O_{\T+1})$ to characterize the set of optimizers.

\begin{theorem}[Structure of the primal optimal set] 
\label{prop:uniqueness2}
Under \Cref{as:reg}, let $M^*$ be an optimal solution of \eqref{eq:primal}, and let $u_t:=\exp(C^\top \lambda_t)$, $w_t:=\exp(\nu_t)$ be the associated dual scalings, where $\lambda_{[0:\T]}$ are optimal dual variables and $w_{[0:\T]}$ are constructed as in \Cref{prop:duality}. Define $\mathcal A_t$ by \eqref{eq:mathcalA}. Then, any other optimal solution $\tilde M$ of \eqref{eq:primal} can be written as
\begin{equation*}
\tilde M_t = M_t^*+\diag(z_t)\mathcal A_t, \quad z_{t+1}=\mathcal A_t^\top z_t,\quad t=0,\ldots,\T-1,
\end{equation*}
for a sequence $z_0,\ldots,z_\T\in\mR^n$ satisfying $z_0\in\ker(\mathcal O_{\T+1})$ and $M_0^*\one+z_0\ge 0$. 
\end{theorem}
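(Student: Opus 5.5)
The argument rests on Theorem~\ref{prop:duality}. Since there is no duality gap under \Cref{as:reg}, a single maximizer $\lambda_{[0:\T]}$ of \eqref{eq:dualnonlinear}, together with the scalings $w_t$ built from it, certifies \emph{every} primal optimum. So the scaling form \eqref{eq:primal_scaling_compact} holds for both $M^*$ and $\tilde M$ with the \emph{same} matrices $\mathcal A_t$.

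\textbf{Step 1: common scaling form.} By the above, every optimal $\tilde M$ satisfies
\[
\tilde M_t=\diag(\tilde\mu_t)\mathcal A_t, \qquad \tilde\mu_t:=\tilde M_t\one .
\]
The same identity holds for $M^*$ with $\mu^*_t:=M_t^*\one$. Define $z_t:=\tilde\mu_t-\mu^*_t$ for $t=0,\ldots,\T-1$ and $z_\T:=\tilde M_{\T-1}^\top\one-M_{\T-1}^{*\top}\one$. Linearity of $\mu\mapsto\diag(\mu)\mathcal A_t$ then gives $\tilde M_t=M_t^*+\diag(z_t)\mathcal A_t$ immediately.

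\textbf{Step 2: propagation and the kernel condition.} The matching constraint \eqref{eq:matching} for both solutions yields $\tilde\mu_{t+1}=\tilde M_t^\top\one=\mathcal A_t^\top\tilde\mu_t$, and likewise for $\mu^*$. Subtracting gives $z_{t+1}=\mathcal A_t^\top z_t$ for all $t$, including the terminal index $\T$ by the definition of $z_\T$. The observation constraints \eqref{eq:obst}--\eqref{eq:finalT} hold with the same data $\rho_t$ for both solutions, so $Cz_t=0$ for $t=0,\ldots,\T$. Hence $z_0$ lies in the unobservable subspace of \eqref{eq:systemM}. By \Cref{prop:uniqueness} this subspace equals $\ker(\mathcal O_{\T+1})$, the kernel of the observability matrix of \eqref{eq:systemA}. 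Finally, $M_0^*\one+z_0=\tilde M_0\one\ge0$.

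\textbf{Main obstacle.} The delicate point is the claim in Step 1 that the dual-derived form \eqref{eq:primal_scaling_compact} holds for every optimum, with $\mathcal A_t$ independent of which optimum is used. I would argue it explicitly via zero duality gap. For any primal optimal $\tilde M$ and dual optimal $(\lambda,\nu)$, the objective equals $\sum_t\lambda_t^\top\rho_t=\inf_M\mathcal L(M,\lambda,\nu)$. Since $\tilde M$ is feasible, $\mathcal L(\tilde M,\lambda,\nu)$ equals the primal objective. Therefore $\tilde M$ minimizes the Lagrangian, and each of its rows minimizes the corresponding row subproblem. \Cref{lemma:dualconstr} then forces each row into the form \eqref{eq:row-minimizer-dual-form} or zero.

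A secondary subtlety is the multiplier $\nu$. Theorem~\ref{prop:duality} only states the existence of suitable $w_t$ after eliminating $\nu$, so I would fix the specific choice $w_t:=u_t\odot(A_tw_{t+1})$ and check that $(\lambda,\log w)$ is dual optimal in the unreduced dual. This holds because the constraints are met with equality and the dual objective does not depend on $\nu$, so the Lagrangian argument above applies with this fixed $\mathcal A_t$.
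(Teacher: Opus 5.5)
Your proposal is correct and follows essentially the paper's proof: both optima minimize the Lagrangian at the same dual point, so \Cref{lemma:dualconstr} makes each row a multiple of the corresponding row of $\mathcal A_t$, the constraints then give $Cz_t=0$ and $z_{t+1}=\mathcal A_t^\top z_t$, and \Cref{prop:uniqueness} transfers the kernel condition to $\ker(\mathcal O_{\T+1})$. Writing both optima directly as $M_t=\diag(M_t\one)\mathcal A_t$ shortcuts the paper's intermediate step $\Delta M_t=\diag(\alpha_t)\mathcal A_t$ with $\alpha_t=z_t$ obtained via tightness. Your zero-gap argument and your dual-feasibility check for the chosen $w_t$ fill in details the paper leaves implicit; note only that the $t=0$ inequality there holds by feasibility of $\lambda$ in \eqref{eq:dualnonlinear}, not with equality.
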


\begin{proof}
Let $\tilde M$ be any other optimal solution and set $\Delta M_t:=\tilde M_t-M_t^*$.  Under \Cref{as:reg}, strong duality holds, hence both $M^*$ and $\tilde M$ minimize the
Lagrangian at the same dual point $(\lambda,\nu)$ (with $u_t=\exp(C^\top\lambda_t)$, $w_t=\exp(\nu_t)$). 
The Lagrangian can be separated over the rows of each $M_t$. Fix $(t,i)$ and let $m^*$ and $\tilde m$ denote the $i$-th rows of $M_t^*$ and $\tilde M_t$. The row subproblem is of the form analyzed in \Cref{lemma:dualconstr}. 
Hence the set of minimizing rows is either $\{0\}$, or
$\{\alpha a:\alpha\ge0\}$, where $a$ is the $i$-th row $a$ of $\mathcal A_t$. Therefore $\tilde m-m^*$ is a scalar multiple of
$a$, and collecting rows yields $\alpha_t\in\mR^n$ such that
\begin{equation}
\label{eq:delta_alpha}
\Delta M_t=\diag(\alpha_t)\,\mathcal A_t, \qquad t=0,\ldots,\T-1.
\end{equation}
Define $z_t:=\Delta M_t\one$ for $t=0,\ldots,\T-1$ and $z_\T:=\Delta M_{\T-1}^\top\one$. Feasibility with respect to the marginal constraints \eqref{eq:obst}--\eqref{eq:matching} imposes
\begin{equation}
\label{eq:delta_constraints}
\begin{aligned}
Cz_t &= 0,                 &\qquad t&=0,\ldots,\T,\\
\Delta M_t^\top\one &= z_{t+1}, &\qquad t&=0,\ldots,\T-1.
\end{aligned}
\end{equation}
By \eqref{eq:delta_alpha} we obtain $z_t=\diag(\alpha_t)\mathcal A_t\one$.
If the minimizing set is $\{0\}$, then the $i$-th row of $\Delta M_t$ is zero, hence $(z_t)_i=(\alpha_t)_i=0$; otherwise \Cref{lemma:dualconstr} implies tightness of the dual constraint and $(\mathcal A_t\one)_i=1$. Thus $z_t=\alpha_t$, so \eqref{eq:delta_alpha} becomes $\Delta M_t=\diag(z_t)\mathcal A_t$. Substituting into \eqref{eq:delta_constraints} yields $z_{t+1}=\mathcal A_t^\top z_t$, proving the recursion.

The condition $Cz_t=0$ and $z_{t+1}=\mathcal A_t^\top z_t$ implies that $z_0$ is in the unobservable subspace of the system $(C,\mathcal A)$. By \Cref{prop:uniqueness}, this is equivalent to $z_0 \in \ker ( \mathcal{O}_{\T+1})$. Finally, $\tilde M\ge 0$ implies $\tilde M_0\one=M_0^*\one+z_0\ge 0$.
\end{proof}
As a consequence, we can use the observability of system \eqref{eq:systemA} to assess uniqueness of the solution to \eqref{eq:primal}.
\begin{corollary}[Uniqueness via observability]
\label{cor:unique}
Under \Cref{as:reg}, if $\ker(\mathcal O_{\T+1})=\{0\}$
(equivalently, \eqref{eq:systemA} is observable), then problem \eqref{eq:primal} has a unique optimizer.
\end{corollary}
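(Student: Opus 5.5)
The plan is to derive the corollary directly from \Cref{prop:uniqueness2}, with \Cref{prop:existence} supplying at least one optimizer. Under \Cref{as:reg} the problem is feasible, so by \Cref{prop:existence} an optimal solution $M^*$ exists. By \Cref{prop:duality} the dual maximum is attained at some $\lambda_{[0:\T]}$, and we build the scalings $u_t$, $w_t$ from it exactly as in that proof. This defines the matrices $\mathcal A_t$ in \eqref{eq:mathcalA}.

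Next, take an arbitrary optimal solution $\tilde M$. \Cref{prop:uniqueness2} gives
\[
\tilde M_t = M_t^* + \diag(z_t)\mathcal A_t,
\]
where $z_{t+1} = \mathcal A_t^\top z_t$ and $z_0 \in \ker(\mathcal O_{\T+1})$. The hypothesis $\ker(\mathcal O_{\T+1}) = \{0\}$ forces $z_0 = 0$. Induction on the recursion $z_{t+1} = \mathcal A_t^\top z_t$ then gives $z_t = 0$ for every $t = 0,\ldots,\T$. Hence $\tilde M_t = M_t^*$ for all $t$, so the optimizer is unique.

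The equivalence stated in parentheses, that $\ker(\mathcal O_{\T+1}) = \{0\}$ holds exactly when \eqref{eq:systemA} is observable, is the rank criterion cited from \cite{weiss1972controllability}: the condition is $\rank(\mathcal O_{\T+1}) = n$.

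There is no real obstacle here, since all the work sits in \Cref{prop:uniqueness2} and, through it, in \Cref{prop:uniqueness}. The one point to check is that \Cref{prop:uniqueness2} applies to an \emph{arbitrary} second optimizer, measured against one fixed $M^*$ and one fixed dual point. This holds because strong duality, valid under \Cref{as:reg}, makes every primal optimizer minimize the Lagrangian at the same dual maximizer. It also matters that the kernel condition is stated for the prior system \eqref{eq:systemA}, not for \eqref{eq:systemM}. \Cref{prop:uniqueness} bridges these, because the two systems have the same unobservable subspace.
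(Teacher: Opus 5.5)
Your proposal is correct and follows essentially the same route as the paper: apply \Cref{prop:uniqueness2}, use $\ker(\mathcal O_{\T+1})=\{0\}$ to force $z_0=0$, then propagate through $z_{t+1}=\mathcal A_t^\top z_t$ to get $\tilde M=M^*$. Your one addition is to invoke \Cref{prop:existence} explicitly to guarantee that an optimizer $M^*$ exists; the paper leaves this implicit, and making it explicit is a small improvement.
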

\begin{proof}
By \Cref{prop:uniqueness2}, given the optimal solution $M^*$, any other optimizer $\tilde M$ corresponds to some $z_0\in\ker(\mathcal O_{\T+1})$.
If $\ker(\mathcal O_{\T+1})=\{0\}$ then $z_0=0$, hence $z_t=0$ for all $t$ and $\tilde M=M^*$.
\end{proof}


We illustrate non-uniqueness through two minimal examples in which $\ker(\mathcal O_{\T+1})\neq\{0\}$.

\begin{example}[Downstream unobserved component]
\label{ex:nonuniqueness1}
Let $n=2$, $\T=1$, with
\[
C=\begin{bmatrix}1&0\end{bmatrix},\quad
A=\begin{bmatrix}\tfrac12&\tfrac12\\[1pt]0&1\end{bmatrix},
\quad
\rho_0=2,\quad \rho_1=1.
\]
Then every matrix of the form
\[
M^*(\eta)=\begin{bmatrix}1&1\\[1pt]0&\eta\end{bmatrix},\qquad \eta\ge0,
\]
is feasible and has objective value $0$ (since $\ccD(\eta\mid\eta)=0$), hence is optimal.
Equivalently, the solution is non-unique because $\ker(\mathcal O_2)=\Span\{\begin{bmatrix}
    0 & 1
\end{bmatrix}^\top\}$.
\end{example}
In \Cref{ex:nonuniqueness1}, the second state is never observed and lies downstream, with respect to $A$, of the only observed node: once mass enters the second state, it cannot influence the observations. Hence any additional mass on it remains undetected and can be added without changing feasibility or the objective.

Non-uniqueness may also arise from ambiguity upstream of an observed node.

\begin{example}[Indistinguishable upstream sources]
\label{ex:nonuniqueness2}
Let $n=3$, $\T=1$, with
\[
A=\begin{bmatrix}
\tfrac12&0&\tfrac12\\[1pt]
0&\tfrac12&\tfrac12\\[1pt]
0&0&1
\end{bmatrix},
\quad
C=\begin{bmatrix}0&0&1\end{bmatrix},
\quad
\rho_0=0,\quad \rho_1=1.
\]
This models a network in which the first two states send mass into the third one, which is the only observed node. Both initial marginals $\hat\mu_0=\begin{bmatrix}
    2 &0&  0\end{bmatrix}^\top$ and $\hat\mu_0=\begin{bmatrix}
    0 &2&  0\end{bmatrix}^\top$ admit feasible couplings with objective value $0$, by $M^*=\diag(\hat \mu_0)A$. Hence any convex combination of the two corresponding optimal couplings is again optimal, yielding a continuum of optima and non-uniqueness. In this case $\ker(\mathcal O_2)=\Span\{\begin{bmatrix}
        1 & -1 & 0
    \end{bmatrix}^\top \}$.
\end{example}

%
%
\section{Computation of solutions via entropic proximal point method}
\label{sec:computational}
Since the total mass may not be fixed, problem \eqref{eq:primal} cannot be directly solved using Sinkhorn-type methods. Instead, we solve it using an entropic proximal method, which iteratively updates the unknown unobserved component of the initial marginal.
Let $ \eta \in \mathbb{R}^{n-k}_{\ge 0}$ parametrize the mass initially located in the unobserved states, so that $
M_0 \one = C^\top \rho_0 + \overline C^\top \eta$. This reduces \eqref{eq:primal} to the minimization of a convex function in the variable \(\eta\), which we solve with an entropic proximal-point method.

For a closed, convex, proper function \(f\), the entropic proximal-point algorithm consists in the iteration of 
\begin{equation}
\label{eq:proximal}
x^{k} \in \arg\min_{x \ge 0}\Bigl\{ f(x)+\epsilon_k \ccD(x\,|\,x^{k-1})\Bigr\},
\end{equation}
where \(\epsilon_k>0\) and \(\ccD(\cdot\,|\,\cdot)\) denotes the KL divergence \eqref{eq:kl} \cite{teboulle1992entropic}. This is the KL analogue of the classical proximal-point method obtained with the squared Euclidean distance.
To apply \eqref{eq:proximal} to \eqref{eq:primal}, define
\begin{subequations}
\label{eq:minf}
\begin{align}
 f(\eta) = &\inf_{M_{[0:\T-1]}\geq 0}   \ccD( M_0 \,|\,\diag(C^\top \rho_0 +\overline{C}^\top \eta)A_0) \notag \\ 
 & \qquad \qquad +\sum_{t=1}^{\T-1} \ccD( M_t \,|\,\diag(M_t \one)A_t) \label{eq:minf-obj}\\
 &\quad  \text{s.t. } \    C M_t \one = \rho_{t}, \quad \;\; \mbox{for }  t=0,\dots,\T\!-\!1, \label{eq:minf-b}\\
 & \quad\qquad CM_{\T-1}^\top \one = \rho_\T, \label{eq:minf-c}\\
& \quad\qquad M_t \one= M_{t-1}^\top  \one,  \;  \mbox{for }  t=1,\dots,\T\!-\!1, \label{eq:minf-d}\\
& \quad\qquad \overline{C}M_0 \one = \eta. \label{eq:minf-e}
\end{align}
\end{subequations}
Thus \(f(\eta)\) is the optimal value of \eqref{eq:primal} when the unobserved part of the initial marginal is fixed to \(\eta\).

The following result holds.
\begin{lemma}
\label{lem:ccp}
    The function $f$ defined in \eqref{eq:minf} is closed, convex and proper.
\end{lemma}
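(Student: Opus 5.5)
The plan is to view $f$ as the optimal-value (marginal) function of a jointly convex program in $(\eta,M)$, and to obtain the three properties from three separate facts: joint convexity of the objective, nonnegativity together with existence of minimizers, and a compactness argument for closedness.

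\emph{Convexity.} Define $F(\eta,M)$ as the objective \eqref{eq:minf-obj} plus the indicator of the constraint set \eqref{eq:minf-b}--\eqref{eq:minf-e} together with $M\ge 0$, $\eta\ge 0$. All constraints are linear in $(\eta,M)$, so the feasible set is a convex polyhedron. Each term $\ccD(M_t\,|\,\diag(M_t\one)A_t)$ equals, row by row, $\sum_j m_j\log\frac{m_j}{(\one^\top m)a_j}$ (the linear terms cancel), since $\sum_j a_j=1$. This expression is the perspective of a linear-plus-entropy function, hence jointly convex in $m$. The first term $\ccD(M_0\,|\,\diag(C^\top\rho_0+\overline C^\top\eta)A_0)$ is a KL divergence whose second argument is affine in $\eta$. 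Since KL is jointly convex in both arguments, this term is jointly convex in $(\eta,M_0)$. Therefore $F$ is jointly convex, and its partial minimization $f(\eta)=\inf_M F(\eta,M)$ is convex.

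\emph{Properness.} Every term of $F$ is a KL divergence and hence nonnegative, so $f\ge 0>-\infty$. Feasibility of \eqref{eq:primal}, guaranteed for instance by \Cref{as:reg}, gives a feasible $M$; setting $\eta:=\overline C M_0\one$ makes $f(\eta)$ finite, because the first term then coincides with the original one. Hence $\dom f\neq\emptyset$ and $f$ is proper.

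\emph{Closedness.} This is the main obstacle, since a partial minimum of a closed convex function need not be lower semicontinuous. I will argue directly from the constraints. The constraints fix the total mass: $\one^\top M_0\one=\one^\top\rho_0+\one^\top\eta$, and the matching constraints \eqref{eq:minf-d} propagate the same total to every $M_t$. Thus for $\eta$ in a bounded set, the feasible $M$ lie in a compact set.

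Now take $\eta^k\to\eta$ with $\liminf_k f(\eta^k)<\infty$, and pass to a subsequence realizing the $\liminf$. For each $k$, choose near-optimal $M^k$ with $F(\eta^k,M^k)\le f(\eta^k)+1/k$; minimizers in fact exist, by the argument of \Cref{prop:existence}. By the compactness just established, extract a further subsequence with $M^k\to M$. The limit $(\eta,M)$ is feasible, since the constraint set is closed. The objective is lower semicontinuous: KL is lsc under the $0\log 0$ convention, and $\ccD(p|q)=+\infty$ whenever $\supp p\not\subseteq\supp q$, which is again lsc. It follows that $f(\eta)\le F(\eta,M)\le\liminf_k F(\eta^k,M^k)=\liminf_k f(\eta^k)$. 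This shows $f$ is lsc, i.e.\ closed.

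The delicate point in this last step is the lower semicontinuity of KL where the second argument tends to zero, for example when $(C^\top\rho_0+\overline C^\top\eta)_i=0$. Handling it requires the convention that the divergence is $+\infty$ off the support, which makes the KL function lsc on $\mR_{\ge0}\times\mR_{\ge0}$.
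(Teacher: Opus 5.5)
Your proof is correct and matches the paper's argument: convexity from joint convexity of the KL terms plus partial minimization, properness from nonnegativity plus feasibility, and closedness because the constraints fix the total mass $\one^\top\rho_0+\one^\top\eta$, which bounds the feasible $M$ locally uniformly in $\eta$. The only difference is that you write out the sequential compactness and lower-semicontinuity step explicitly, including the $+\infty$ extension of $\ccD$ off the support, where the paper cites level-boundedness and Theorem~1.17 of Rockafellar--Wets.
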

\begin{proof}
Properness follows from feasibility of \eqref{eq:primal}, as there exists at least one \(\eta\in\mathbb{R}^{n-k}_{\ge 0}\) for which the feasible set of \eqref{eq:minf} is nonempty, and the objective is nonnegative.

To prove convexity, write $f(\eta)=\inf_M F(M,\eta)$,
where $
F(M,\eta):=
\ccD (M_0 | \diag(C^\top\rho_0+\overline C^\top\eta)A_0)
+\sum_{t=1}^{\T-1}\ccD(M_t|\diag(M_t\one)A_t)
+\delta_{\mathcal C}(M,\eta),
$
and \(\mathcal C\) denotes the set of \((M,\eta)\) satisfying the linear constraints \eqref{eq:minf-b}--\eqref{eq:minf-e} and \(M_t\ge 0\).
The function \(F\) is jointly convex, since KL divergence is jointly convex in its two arguments, the dependence on \(\eta\) and \(M_t\one\) is affine, and \(\delta_{\mathcal C}\) is convex. Therefore \(f\) is convex as it is the partial infimum of a jointly convex function. Thus \(f\) is convex, since the partial infimum of a jointly convex function is convex \cite[Section~3.2.5]{boyd2004convex}.
Finally, \(F\) is proper and lower semicontinuous, and for fixed \(\eta\) the constraints determine the total mass in the system, which bounds the feasible set in \(M\). Thus \(F\) is level-bounded in \(M\), locally uniformly in \(\eta\), and \(f\) is lower semicontinuous by \cite[Theorem~1.17]{rockafellar2009variational}. Therefore \(f\) is closed.
\end{proof}

We now apply the entropic proximal-point method to the function \(f\) defined in \cref{eq:minf}.
\begin{proposition}
\label{prop:proximalpoint}
Under Assumption~\ref{as:reg}, the entropic proximal-point algorithm \eqref{eq:proximal}, applied to \(f\), converges to a minimizer of \(f\). Moreover, the minimizer in the proximal step is given by $\eta^*=\overline{C}{M_0^*} \one$ where $M^*_{[0:\T-1]}$ is the solution of\\[-15pt]
\begin{align}
\min_{M_{[0:\T-1]}\geq 0} \  &\; \ccD( M_0 \,|\,\diag(C^\top \rho_0 + \overline{C}^\top \hat{\eta})A_0)  \nonumber\\ 
&\; + \sum_{t=1}^{\T-1} \ccD( M_t \,|\,\diag(M_t \one)A_t) \label{eq:proxstep3}
\\
    \mbox{subject to } &\;\eqref{eq:minf-b}-\eqref{eq:minf-d}.\nonumber
\end{align}

\end{proposition}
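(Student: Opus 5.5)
The proof has two parts. Convergence follows from the general theory of entropic proximal-point methods once $f$ is shown to be suitable. The characterization of the proximal step follows from a chain-rule decomposition of the KL divergence, which turns \eqref{eq:proxstep3} into the proximal subproblem for $f$ in the variable $\eta$. I treat the case $\epsilon_k=1$ with $\hat\eta=\eta^{k-1}$; general $\epsilon_k$ is discussed at the end.

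\textbf{Convergence.} By \Cref{lem:ccp}, $f$ is closed, convex and proper on $\mathbb{R}^{n-k}_{\ge0}$. Under \Cref{as:reg}, \eqref{eq:primal} is feasible, so by \Cref{prop:existence} it has a minimizer $M^\star$. Minimizing $f$ over $\eta$ is the same as minimizing \eqref{eq:primal} jointly, since any feasible $M$ of \eqref{eq:primal} is feasible for \eqref{eq:minf} with $\eta=\overline C M_0\one$, and the objectives coincide. Hence $\eta^\star=\overline C M_0^\star\one$ minimizes $f$, and $\arg\min f\neq\emptyset$. I would then invoke the convergence theorem for the entropic proximal-point method with a $\varphi$-divergence \cite{teboulle1992entropic}. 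For a closed proper convex $f$ on the nonnegative orthant with nonempty solution set, and $\epsilon_k$ bounded (in particular constant), that theorem gives $f(\eta^k)\to\min f$ and convergence of $\eta^k$ to a minimizer.

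What must be checked is that the hypotheses of that theorem hold. The iterates should remain in the domain of $\ccD(\cdot\,|\,\eta^{k-1})$, and the positivity or support conditions required there should be satisfied. \Cref{as:reg} should provide a feasible point with full support, so a positive starting point $\eta^0>0$ can be chosen. Each prox step then keeps the support no smaller than the support of the attained minimizer.

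\textbf{Characterization of the proximal step.} For a row $m\ge0$ and a probability row $a$ (with $\mathbf 1^\top a=1$ as in \Cref{lemma:dualconstr}), and a scalar $s\ge0$, I would first verify the chain rule
\[
\ccD(m\,|\,s\,a)=\ccD\big(m\,\big|\,(\one^\top m)a\big)+\ccD(\one^\top m\,|\,s).
\]
Applying it row-wise to $M_0$, with $s_i=(C^\top\rho_0+\overline C^\top\hat\eta)_i$, gives
\[
\ccD\big(M_0\,|\,\diag(C^\top\rho_0+\overline C^\top\hat\eta)A_0\big)=\ccD\big(M_0\,|\,\diag(M_0\one)A_0\big)+\ccD\big(M_0\one\,|\,C^\top\rho_0+\overline C^\top\hat\eta\big).
\]
On the observed states, the constraint $CM_0\one=\rho_0$ makes the second divergence vanish. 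It therefore reduces to $\ccD(\overline C M_0\one\,|\,\hat\eta)$. Set $\eta:=\overline C M_0\one$, and split the minimization in \eqref{eq:proxstep3} as an outer minimization over $\eta\ge0$ and an inner minimization over $M$ subject to \eqref{eq:minf-b}--\eqref{eq:minf-e}. The inner problem is exactly $f(\eta)$, again by the chain rule, which rewrites its first term as $\ccD(M_0\,|\,\diag(M_0\one)A_0)$. So \eqref{eq:proxstep3} equals $\min_{\eta\ge0}\{f(\eta)+\ccD(\eta\,|\,\hat\eta)\}$. Taking $\hat\eta=\eta^{k-1}$, a minimizer $M^\star$ of \eqref{eq:proxstep3} therefore yields the prox iterate $\eta^\star=\overline C M_0^\star\one$. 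This is the claim.

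\textbf{Main obstacle.} The main obstacle is the weight $\epsilon_k$. The decomposition produces the prox step with $\epsilon_k=1$ exactly. For general $\epsilon_k$, I would try rescaling the prior mass on the unobserved rows so that a weighted divergence $\epsilon_k\ccD(\eta\,|\,\hat\eta)$ appears. If that does not work, I would state the result for $\epsilon_k\equiv1$, which is the setting in which \eqref{eq:proxstep3} is written. A second point to confirm is that minimizers of \eqref{eq:proxstep3} exist. This follows as in \Cref{prop:existence}, since fixing the prior mass bounds the total mass.
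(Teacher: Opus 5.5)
Your proposal is correct and follows the paper's proof. Both obtain convergence from the entropic ($\varphi$-divergence) proximal theory with $\epsilon_k\equiv1$ and exact steps, once $f$ is closed, proper and convex with attained minimum, and both derive the prox-step identity; your row-wise chain rule $\ccD(m\,|\,s a)=\ccD(m\,|\,(\one^\top m)a)+\ccD(\one^\top m\,|\,s)$ states that identity more carefully than the paper's remark that the $\hat\eta$ terms in the logarithm cancel, and both versions rely on $A_0\one=\one$. The hypothesis to pin down is the one the paper states, $\dom f\cap\mathbb{R}^{n-k}_{>0}\neq\emptyset$, not the choice of a positive starting point: Assumption~\ref{as:reg} gives it because row-stochasticity of $A_0$ forces every row of the regular feasible $M_0$ to have positive sum, so $\overline C M_0\one>0$ lies in $\dom f$.
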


\begin{proof}   
By \Cref{lem:ccp}, \(f\) is closed, convex and proper. Since \eqref{eq:primal} admits a minimizer by \Cref{prop:existence}, the minimum of \(f\) is attained. Moreover, under Assumption~\ref{as:reg}, one has \(\dom f\cap\mathbb{R}^{n-k}_{>0}\neq\emptyset\). The convergence result then follows from \cite[Theorem~4.3]{teboulle1997convergence}, by considering a constant regularization parameter $\epsilon_k \equiv 1$ and exact proximal steps, i.e., solving each inner problem to optimality.

Given the current iterate \(\hat\eta\), the proximal step is
\begin{equation*}
\begin{aligned}
 \min_{\eta \geq 0} \inf_{M_{[0:\T-1]}\geq 0} \ &\;  \ccD( M_0 \,|\,\diag(C^\top \rho_0 +\overline{C}^\top \eta)A_0)  \\ 
 &\; +\ccD(\eta | \hat{\eta)}  + \sum_{t=1}^{\T-1} \ccD( M_t \,|\,\diag(M_t \one)A_t) \\
 \mbox{subject to }& \;\eqref{eq:minf-b}-\eqref{eq:minf-e}.
 \end{aligned}
\end{equation*}
Using the constraint \(\overline C M_0\one=\eta\), the first marginal and the proximal term can be combined as
\begin{equation*}
    \begin{split}
       & \ccD\!\left(M_0\,\middle|\,\diag(C^\top\rho_0+\overline C^\top\eta)A_0\right)
+\ccD(\eta\mid\hat\eta)
= \\ &
\ccD\!\left(M_0\,\middle|\,\diag(C^\top\rho_0+\overline C^\top\hat\eta)A_0\right)
    \end{split}
\end{equation*}
since the  $\hat \eta$ terms in the logarithm cancel out.
Next note that the objective function no longer depends on $\eta$, and thus the minimizer is given by $\eta^*=\overline{C}{M_0}^* \one$ where $M^*_{[0:\T-1]}$ is the solution of \eqref{eq:proxstep3}.
\end{proof}

\begin{algorithm}
\caption{Entropic proximal method}\label{alg:entropy}
\begin{algorithmic}
\State Choose $\hat{\eta}>0$
    \While{the change in $\hat{\eta}$ is above tolerance}
        \State $M \gets$ optimal solution of \eqref{eq:proxstep3}
        \State $\hat{\eta} \gets \overline{C} M_0 \one$
    \EndWhile
\end{algorithmic}
\end{algorithm}

The resulting iteration is summarized in \Cref{alg:entropy}, which at each iteration solves the entropy minimization problem \eqref{eq:proxstep3} with fixed initial prior
\(
\hat\mu := C^\top \rho_0 + \overline C^\top \hat\eta.
\)
We now derive an efficient solver for this problem. A dual formulation leads to Sinkhorn-type block coordinate ascent updates involving only matrix-vector products and pointwise operations.

\begin{theorem}
\label{thm:dual-prox}
The dual of \eqref{eq:proxstep3} is
\begin{equation}
\label{eq:dualunconstrained}
\max_{\lambda_{[0:\T]}} \quad
\sum_{t=0}^{\T} \lambda_t^\top \rho_t
-\hat\mu^\top \diag(u_0)A_0\diag(u_1)A_1\cdots A_{\T-1}u_\T,
\end{equation}
where \(\lambda_t\in\mathbb{R}^k\) and \(u_t:=\exp(C^\top\lambda_t)\) for \(t=0,\ldots,\T\).
\end{theorem}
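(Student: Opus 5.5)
We derive the dual of \eqref{eq:proxstep3} by Lagrangian duality, mirroring the proof of \Cref{prop:duality}. The only new feature is that the first block, $\ccD(M_0\,|\,\diag(\hat\mu)A_0)$ with $\hat\mu$ fixed, has a fixed prior rather than one rescaled by the row sums.

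We introduce multipliers $\lambda_t\in\mR^k$ for \eqref{eq:minf-b}--\eqref{eq:minf-c} and $\nu_t\in\mR^n$ for \eqref{eq:minf-d}, with $\nu_0:=0$ and $\nu_\T:=C^\top\lambda_\T$, and set $u_t=\exp(C^\top\lambda_t)$, $w_t=\exp(\nu_t)$. The Lagrangian separates into blocks $t=0,\dots,\T-1$, each linear term being $-\langle M_t,\Xi_t\rangle$ with
\[
(\Xi_t)_{ij}=(C^\top\lambda_t)_i-(\nu_t)_i+(\nu_{t+1})_j .
\]
For $t\ge1$ the blocks are exactly those of \Cref{prop:duality}. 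By \Cref{lemma:dualconstr} applied row-wise, each has infimum $0$ when $u_t\odot(A_tw_{t+1})\le w_t$ and $-\infty$ otherwise.

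For $t=0$ we minimize $\ccD(M_0\,|\,K)-\langle M_0,\Xi_0\rangle$ with $K=\diag(\hat\mu)A_0$. This is a standard strictly convex, coercive entropy problem. Its minimizer is $M_0=K\odot\exp(\Xi_0)=\diag(\hat\mu\odot u_0)A_0\diag(w_1)$, and its value is $-\one^\top(K\odot\exp(\Xi_0))\one=-\hat\mu^\top\diag(u_0)A_0w_1$ (using the $+q_i$ term of \eqref{eq:kl}, the constants combine to give exactly this). The dual function is therefore
\[
\sum_{t=0}^\T\lambda_t^\top\rho_t-\hat\mu^\top\diag(u_0)A_0w_1
\]
subject to $u_t\odot(A_tw_{t+1})\le w_t$ for $t=1,\dots,\T-1$, with $w_\T=u_\T$.

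To eliminate the $\nu$'s, note that the objective is decreasing in $w_1$ (since $\hat\mu,u_0,A_0\ge0$). The constraints only bound $w_t$ from below: $w_t\ge u_t\odot(A_tw_{t+1})$. We will show by backward induction that the smallest feasible choice,
\[
w_t=u_t\odot(A_tw_{t+1}),
\]
is feasible and componentwise minimal. Monotonicity of $w\mapsto u_t\odot(A_tw)$, since $A_t\ge0$, gives minimality of $w_1$ among all feasible choices. Hence the supremum over $\nu$ is attained at $w_1=\diag(u_1)A_1\cdots A_{\T-1}u_\T$. Substituting yields \eqref{eq:dualunconstrained}, with the $\lambda_t$ now unconstrained.

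The main obstacle is justifying that this is genuinely the dual, with no duality gap, rather than just a formal computation. First, the elimination of $\nu$ is a supremum over a set where one needs $w_t>0$; the minimal choice is strictly positive under regularity, and zero entries can be handled as limits. Second, strong duality would follow, as in \Cref{prop:duality}, from \cite[Thm.~28.2]{Rockafellar1970} under \Cref{as:reg}, since \eqref{eq:proxstep3} is convex with linear constraints and has a relatively interior feasible point. I would state this as the justification.
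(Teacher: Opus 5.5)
Your proposal is correct and follows the paper's proof essentially step by step. Both use the same Lagrangian, apply \Cref{lemma:dualconstr} row-wise for $t\ge1$, minimize the $t=0$ block entrywise to get $M_0=\diag(\hat\mu\odot u_0)A_0\diag(w_1)$, and eliminate $\nu$ by choosing the componentwise smallest admissible $w_t=u_t\odot(A_tw_{t+1})$. Your monotonicity argument and the strong-duality remark via \cite[Thm.~28.2]{Rockafellar1970} spell out what the paper leaves implicit.

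One small correction, which the paper's proof shares: the $+q_i$ term of \eqref{eq:kl} does not cancel. It leaves the constant $\one^\top\diag(\hat\mu)A_0\one=\hat\mu^\top\one$, because $A_0$ is row-stochastic, so \eqref{eq:dualunconstrained} is the Lagrangian dual only up to this additive constant, which does not change its maximizers. Also, positivity of the minimal $w_t$ does not need the regularity assumption: it follows from $u_t>0$, $w_\T=u_\T>0$, and every row of $A_t$ having a positive entry.
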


\begin{proof}
Introduce Lagrange multipliers \(\lambda_t\), \(t=0,\ldots,\T\), for the observation constraints \eqref{eq:minf-b}, \eqref{eq:minf-c}, and \(\nu_t\), \(t=1,\ldots,\T-1\), for the matching constraints \eqref{eq:minf-d}. The Lagrangian is
\begin{align*}
\mathcal{L}(M,\lambda,\nu)
\!= \!& \ccD\!\left(M_0\,\middle|\,\diag(\hat\mu)A_0\right) \!
+ \!\sum_{t=1}^{\T-1} \!\ccD\!\left(M_t\,\middle|\,\diag(M_t\one)A_t\right) \\
&+\sum_{t=0}^{\T-1}\lambda_t^\top(\rho_t-CM_t\one)
+\lambda_\T^\top(\rho_\T-CM_{\T-1}^\top\one)\\
&+\sum_{t=1}^{\T-1}\nu_t^\top(M_t\one-M_{t-1}^\top\one).
\end{align*}
For \(t=1,\ldots,\T-1\), minimization over \(M_t\) is as in the proof of \Cref{prop:duality}. Hence, with \(u_t:=\exp(C^\top\lambda_t)\), \(w_t:=\exp(\nu_t)\), and \(w_\T:=u_\T\), it is finite if and only if 
\[
u_t\odot(A_t w_{t+1})\le w_t,\qquad t=1,\ldots,\T-1,
\]
in which case its contribution to the dual function is zero.
For \(t=0\), the terms involving \(M_0\) are
\[
\ccD(M_0\mid \diag(\hat\mu)A_0)
-\langle M_0,\; C^\top\lambda_0\,\one^\top+\one\,\nu_1^\top\rangle.
\]

Since \(\diag(\hat\mu)A_0\) is fixed, the minimization is separable entrywise. Setting the derivative with respect to the entries of $M_0$ to zero gives
\[
M_0^*=\diag(\hat\mu)\diag(u_0)A_0\diag(w_1).
\]
Substituting into the Lagrangian yields the contribution $-(\hat\mu\odot u_0)^\top A_0 w_1.$
Hence the dual problem is
\begin{align*}
\max_{\lambda_{[0:\T]},\,\nu_{[1:\T-1]}}\quad
& \sum_{t=0}^{\T}\lambda_t^\top\rho_t-(\hat\mu\odot u_0)^\top A_0 w_1\\
\text{subject to}\quad
& u_t\odot(A_t w_{t+1})\le w_t,\qquad t=1,\ldots,\T-1.
\end{align*}
Since the objective depends on \(w_t\) only through the term \((\hat\mu\odot u_0)^\top A_0 w_1\), it is maximized by choosing the smallest admissible \(w_1\), that is, by taking equality in the constraints, which gives \(
w_t=u_t\odot A_t w_{t+1},\qquad t=\T-1,\ldots,1.
\)
Substituting recursively in the objective function yields
\[
(\hat\mu\odot u_0)^\top A_0 w_1
=
\hat\mu^\top \diag(u_0)A_0\diag(u_1)\cdots A_{\T-1}u_\T,
\]
which gives \eqref{eq:dualunconstrained}.
\end{proof}

The dual problem \eqref{eq:dualunconstrained} can be solved efficiently by block coordinate ascent. The corresponding updates admit a recursive implementation that is described in the following proposition.

\begin{proposition}
\label{prop:bca}
Define \(\hat{\phi}_0:=\hat\mu\), \(\phi_\T:=\one\), and recursively
\begin{subequations}
    \begin{align}
\hat{\phi}_{t+1} &= A_t^\top(\hat{\phi}_t\odot u_t), \qquad \quad \ \  t=0,\ldots,\T-1, \label{eq:hatphi-rec}\\
\phi_t &= A_t(u_{t+1}\odot\phi_{t+1}), \qquad t=\T-1,\ldots,0. \label{eq:phi-rec}
\end{align}
\end{subequations}
Then block coordinate ascent applied to \eqref{eq:dualunconstrained} updates \(u_t\) according to
\begin{equation}
\label{eq:u-update}
C u_t = \rho_t \oslash C(\phi_t\odot\hat{\phi}_t),
\quad
\overline C\,u_t = \one,
\quad t=0,\ldots,\T.
\end{equation}
These updates are summarized in \Cref{alg:cap2}, and the resulting iterates converge to a maximizer of \eqref{eq:dualunconstrained}.
\end{proposition}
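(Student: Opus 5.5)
The plan has three steps: isolate the dependence of the dual objective on a single block $\lambda_t$, solve the resulting one-block maximization in closed form, and then invoke a standard convergence result for block coordinate ascent on smooth concave objectives.

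\textbf{Step 1: factorize the objective.} Denote the objective of \eqref{eq:dualunconstrained} by
\[
g(\lambda)=\sum_s\lambda_s^\top\rho_s-\Phi(\lambda),\qquad \Phi(\lambda)=\hat\mu^\top\diag(u_0)A_0\cdots A_{\T-1}u_\T .
\]
Using the forward and backward recursions \eqref{eq:hatphi-rec}--\eqref{eq:phi-rec}, for every fixed $t$ the chain splits around $u_t$. The part to the left of $\diag(u_t)$ equals $\hat\phi_t^\top$. The part to the right equals $\phi_t$. Neither $\hat\phi_t$ nor $\phi_t$ depends on $u_t$. Hence
\[
\Phi=\hat\phi_t^\top\diag(u_t)\phi_t=\sum_i(\hat\phi_t\odot\phi_t)_i\,(u_t)_i .
\]
For $t=0$ this uses $\hat\phi_0=\hat\mu$, and for $t=\T$ it uses $\phi_\T=\one$. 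I would check this by a short induction on $t$.

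\textbf{Step 2: solve the block maximization.} Since $u_t=\exp(C^\top\lambda_t)$, we have $(u_t)_{\pi_j}=\exp((\lambda_t)_j)$ on observed states and $(u_t)_i=1$ on unobserved states. This gives $\overline C u_t=\one$ immediately. Maximizing $g$ over $\lambda_t$ with the other blocks fixed is therefore separable. For each $j$ we maximize
\[
(\lambda_t)_j(\rho_t)_j-\exp((\lambda_t)_j)\,\big(C(\hat\phi_t\odot\phi_t)\big)_j+\text{const},
\]
which is a strictly concave scalar function. Its stationarity condition is $\exp((\lambda_t)_j)=(\rho_t)_j/(C(\hat\phi_t\odot\phi_t))_j$, which is exactly \eqref{eq:u-update}.

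A maximizer exists provided $(\rho_t)_j>0$ and $(C(\hat\phi_t\odot\phi_t))_j>0$. The zero cases are handled by the reduction under \Cref{as:reg}: entries forced to zero are removed. Positivity of $\hat\phi_t\odot\phi_t$ on the relevant support I would argue from $\hat\mu>0$ and the support structure guaranteed by \Cref{as:reg}.

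\textbf{Step 3: convergence.} Write $\lambda_t$ in log form. Then $-g$ is a smooth convex function, namely a linear term plus a log-sum-exp–type positive sum of exponentials of sums of $\lambda$'s. Each block minimization has a unique solution, by strict concavity in each coordinate of the block. The standard result for block coordinate descent (e.g.\ Bertsekas' Prop.~2.7.1, or Tseng 2001) then shows that every limit point is stationary, hence optimal.

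\textbf{Main obstacle.} The difficulty is ensuring that limit points exist, i.e.\ that the iterates stay bounded. The dual need not be coercive, because $\lambda$ directions can be unbounded when $\rho$ sits on the boundary. I would first establish that a dual maximizer exists, using strong duality for \eqref{eq:proxstep3} together with \Cref{as:reg}, exactly as in \Cref{prop:duality}. I would then use monotone ascent to confine the iterates to a superlevel set. This set is bounded modulo directions in which $g$ is constant, and those directions can be excluded by the reduction.

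Alternatively, I would interpret the updates as Bregman/KL alternating projections (iterative proportional fitting) onto the affine sets $\{CM_t\one=\rho_t\}$ in primal space. Convergence then follows from Csisz\'ar's I-projection results, since the intersection of these sets is nonempty by \Cref{as:reg}.
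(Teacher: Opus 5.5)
Your proposal is correct and follows the paper's proof. It uses the same factorization $\hat\mu^\top\diag(u_0)A_0\cdots A_{\T-1}u_\T=(\hat\phi_t\odot\phi_t)^\top u_t$, the same stationarity condition $\rho_t=C(\phi_t\odot u_t\odot\hat\phi_t)$ giving \eqref{eq:u-update} (with $\overline C u_t=\one$ from $u_t=\exp(C^\top\lambda_t)$), and the same appeal to Bertsekas' Prop.~2.7.1. Your extra checks that each block maximizer is uniquely attained and that the iterates stay bounded go beyond the paper, which cites that proposition without verifying these hypotheses.
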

\begin{algorithm}
\caption{Block coordinate ascent for \eqref{eq:dualunconstrained}}
\label{alg:cap2}
\begin{algorithmic}
\State Initialize  \(\hat{\phi}_0 \gets \hat{\mu}\), \(\phi_\T \gets \one\), \(u_t>0\), \(t=0,\ldots,\T\)
\For{\(t=\T-1,\ldots,0\)}
    \State \(\phi_t \gets A_t(u_{t+1}\odot\phi_{t+1})\)
\EndFor
\While{the observation residual is above tolerance}
    \For{\(t=0,\ldots,\T\)}
        \State \(u_t \gets C^\top\! (\rho_t \oslash C(\phi_t\odot\hat{\phi}_t))+\overline C^\top\overline C\one\)
         \State  $\hat{\phi}_{t+1} \gets (A^{t})^\top (\hat{\phi}_{t} \odot u_{t})$ \textbf{ if } $t< \T$
    \EndFor
    \For{\(t=\T-1,\ldots,0\)}
        \State \(\phi_t \gets A_t(u_{t+1}\odot\phi_{t+1})\)
    \EndFor
\EndWhile
\end{algorithmic}
\end{algorithm}

\begin{proof}
We solve \eqref{eq:dualunconstrained} by block coordinate ascent, that is, by maximizing the dual objective with respect to one block \(\lambda_t\) at a time while keeping the remaining variables fixed. 
By the recursions \eqref{eq:hatphi-rec}--\eqref{eq:phi-rec}, the vector \(\hat{\phi}_t\) collects the factors to the left of \(u_t\), while \(\phi_t\) collects those to the right. Hence
\[
\hat\mu^\top \diag(u_0)A_0\diag(u_1)\cdots A_{\T-1}u_\T
=
(\phi_t\odot\hat{\phi}_t)^\top u_t,
\quad \forall t.
\]
Taking the derivative of the objective function with respect to $\lambda_t$ and setting it to zero gives the optimality condition
\[
\rho_t = C(\phi_t\odot u_t\odot\hat{\phi}_t), \qquad t=0,\ldots,\T.
\]
Due to the structure of \(C\), this is equivalent to
\[
C u_t = \rho_t \oslash C(\phi_t\odot\hat{\phi}_t),
\qquad
\overline C\,u_t=\one,
\]
that is, $u_t$ is updated in the observed coordinate and set to one in the unobserved ones.
These are precisely the updates implemented in \Cref{alg:cap2}. Since the dual objective is continuously differentiable and concave, convergence of block coordinate ascent follows from \cite[Proposition~2.7.1]{bertsekas1999nonlinear}.
\end{proof}

\begin{remark}
Note the difference between the optimization problems \eqref{eq:proxstep3} and problem (4) in \cite{mascherpa2023estimating} in how the first time step is handled.
Although the dual objective \eqref{eq:dualunconstrained} has a similar form to the corresponding dual in \cite{mascherpa2023estimating}, the factor at \(t=0\) is now determined by partial observations and the current proximal iterate, rather than by a fully prescribed initial marginal.

\end{remark}

\begin{remark}
As stated, \Cref{alg:entropy} requires solving \eqref{eq:proxstep3} at each outer iteration. In practice, this can be made much cheaper by performing only a few sweeps of the inner block coordinate ascent iterations in \Cref{alg:cap2}, often just one. If the \(k\)-th proximal subproblem in \eqref{eq:proximal} is solved with accuracy \(\sigma_k\) such that
\(
\sum_{k=1}^\infty \epsilon_k \sigma_k < \infty,
\)
then the corresponding inexact proximal iterations still converge \cite[Theorem~4.3]{teboulle1997convergence}. 
\end{remark}

\section{Application to Water Networks}
\label{sec:waternetwork}
In this section, we specialize the proposed framework to pollution transport in water distribution networks. We first recall the probabilistic water-flow model of \cite{mascherpa2023estimating}, which provides the transition matrices used as prior information in the Schr{\"o}dinger bridge formulation. We then show how the resulting model is used to estimate contamination from sensor measurements.
\subsection{Water Networks Modeling}
\label{sec:watermodeling}

Consider a water distribution network composed of $n$ interconnected pipes, each represented as a state in a dynamic system. The system is augmented with an absorbing state to account for water that exits the network. 
Pollution transport in the network is observed across $\mathcal{T}$ discrete time steps. We assume that pollution is homogeneously diluted in water and that, at bifurcations, water splits proportionally to the flow rates among downstream branches. 
The hydraulic properties of the network, such as pipe lengths and diameters, are assumed known, and for each time step, we either measure or estimate the flow rate in each pipe. This is feasible when the network includes flow or pressure sensors, as in the SWIL laboratory. 
In the absence of direct access to real-time flow data, water flows in distribution networks can be estimated using historical consumption patterns, see, e.g., \cite{brentan2018water,alvisi2003stochastic}.
These estimates provide macroscopic information on the flow of water through the network at each time step. 
Under these assumptions, the probabilistic motion of individual particles can be inferred and modeled as a time-inhomogeneous Markov chain over the $n$ states. Each particle moves independently, and transitions between pipes are governed by a sequence of transition probability matrices $A_{[0: \T-1]}$, where each element $(A_t)_{ij}$ encodes the probability that a particle in pipe $i$ at time $t$ will be in pipe $j$ at time $t+1$.
These transition probabilities are determined by the flow dynamics. Let $F_i(t)$ denote the flow rate in pipe $i$ at time $t$, and $V_i$ the volume of pipe $i$. Then the normalized water speed in pipe $i$ is defined as
\begin{equation} \label{eq:speed}
    S_i(t) := \frac{F_i(t) \, \Delta t}{V_i}.
\end{equation}
The quantity $S_i(t)$ represents the proportion of water in relation to the pipe volume that exits (or enters) the pipe
during the time interval $(t, t+\Delta t)$. 
For example:
\begin{itemize}
    \item If $S_i(t) < 1$, only part of the volume is replaced, but some stays in the pipe. The networks used in this paper typically reside in this regime.    \item If $S_i(t) = 1$, the water contained in the pipe $i$ is flushed out exactly in one time step. 
    \item If $S_i(t) > 1$, the pipe is flushed entirely in one time-step, and excess water continues downstream.
\end{itemize}
Given the water speeds along the network, the transition probabilities for each pipe can be derived explicitly. In simple line networks without bifurcations, the fraction of water from a given pipe that reaches downstream pipes can be computed from the following result \cite[Proposition~3]{mascherpa2023estimating}. 
\begin{proposition}
\label{prop:pipes}
    Consider the sequence of pipes $\mathcal{I}=\{1,\dots,n\}$ with corresponding speed of water $\{S_1,\dots,S_n\}$ (in pipe units), and assume that $\sum_{k=2}^n S_k^{-1}>1$. 
The proportion of water from pipe $1$ that moves to pipe $k$ is given by
\begin{align}\label{eq:watertransitions}
	a_{1k}=\begin{cases}
	{(1-\alpha)S_1}/{S_{k}} \quad & \mbox{if } k=n_1<n_2\\
	{S_1}/{S_k} \quad &\mbox{if } n_1 < k < n_2 \\
	{\beta S_1}/{S_{k}} \quad &\mbox{if }  k=n_2, n_1<n_2\\
	1 \quad &\mbox{if } k=n_1=n_2\\
	0\quad &\mbox{otherwise}
	\end{cases}
\end{align} 
where the parameters $n_1, n_2\in \mathbb{N}$ and $\alpha, \beta \in [0,1)$  are uniquely specified by the equations
 \begin{equation*}
      \sum_{k=1}^{n_1-1} \frac{1}{S_k}+\frac{\alpha}{S_{{n_1}}}=1,  \qquad
	 \sum_{k=2}^{n_2-1} \frac{1}{S_k}+\frac{\beta}{S_{{n_2}}}=1. 
 \end{equation*}
 \hfill $\square$
\end{proposition}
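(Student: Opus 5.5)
The plan is to model the line of pipes as a plug-flow system and to parametrize each water particle by \emph{travel time} rather than by position. Within one time step the flow rate $F$ is the same in every pipe of a line network without bifurcations, since there is no branching and the water is incompressible. Hence $S_k=F\Delta t/V_k$, and a particle needs time $1/S_k$ (in time-step units) to cross pipe $k$. I would describe a particle initially in pipe $1$ by its relative position $x\in[0,1]$, with $x=0$ at the inlet and $x=1$ at the outlet. It leaves pipe $1$ after time $(1-x)/S_1$ and then crosses pipes $2,3,\dots$ successively. Its position after one time step is therefore obtained by subtracting cumulative crossing times from the budget $1$.

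First, I would locate the two extreme particles.
\begin{itemize}
\item The front particle ($x=1$) starts at the outlet of pipe $1$ and spends the whole unit of time in pipes $2,3,\dots$. It stops in the pipe $n_2$ and at the fraction $\beta$ determined by $\sum_{k=2}^{n_2-1}S_k^{-1}+\beta/S_{n_2}=1$.
\item The rear particle ($x=0$) starts at the inlet of pipe $1$. By $\sum_{k=1}^{n_1-1}S_k^{-1}+\alpha/S_{n_1}=1$ it ends in pipe $n_1$ at fraction $\alpha$.
\end{itemize}
The assumption $\sum_{k=2}^n S_k^{-1}>1$ guarantees that the front does not leave the last pipe. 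Together with monotonicity of the cumulative sums, it also gives existence and uniqueness of $n_1\le n_2$ and of $\alpha,\beta\in[0,1)$. Since plug flow preserves the order of particles, the water from pipe $1$ occupies exactly the contiguous stretch from position $\alpha$ in pipe $n_1$ to position $\beta$ in pipe $n_2$. This gives the support claim: $a_{1k}=0$ outside $[n_1,n_2]$.

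Second, I would compute the volume fractions. A sub-interval of travel time of length $d$ spent in pipe $k$ corresponds to a length $S_k d$ in pipe-$k$ units, hence to the volume $V_kS_kd=F\Delta t\,d$. The same time-slab corresponds to the volume $V_1S_1d=F\Delta t\,d$ of the original water of pipe $1$. As a fraction of pipe $1$'s content this is $S_1 d$. So the fraction of pipe-$1$ water found in pipe $k$ is $S_1$ times the travel-time length of pipe $k$ covered by the stretch. That length is
\begin{itemize}
\item $1/S_k$ for pipes fully covered ($n_1<k<n_2$), giving $S_1/S_k$;
\item $(1-\alpha)/S_{n_1}$ for pipe $n_1$, the part beyond the rear particle, giving $(1-\alpha)S_1/S_{n_1}$;
\item $\beta/S_{n_2}$ for pipe $n_2$, the part before the front particle, giving $\beta S_1/S_{n_2}$.
\end{itemize}
If $n_1=n_2$, all the water lands in one pipe and $a_{1k}=1$ there. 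As a check, the fractions sum to $S_1\big[(1-\alpha)/S_{n_1}+\sum_{n_1<k<n_2}S_k^{-1}+\beta/S_{n_2}\big]$. Subtracting the two defining equations shows that the bracket equals $1/S_1$, so the fractions sum to $1$, consistent with row-stochasticity.

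The main obstacle is justifying the conversion between travel time and volume fraction. It relies on the flow rate being common to all pipes in the line during the step; this has to be stated as part of the modelling assumptions of \cite{mascherpa2023estimating}. A secondary point is handling the degenerate boundary cases $\alpha=0$ or $\beta=0$ without double counting. Here I would use the conventions in the defining equations, namely $\alpha,\beta\in[0,1)$ with the index chosen as the first pipe where the budget is exhausted.
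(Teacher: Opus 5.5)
Your proof is correct. The paper does not prove this proposition itself; it imports it from \cite[Proposition~3]{mascherpa2023estimating}, so there is no in-paper argument to compare against. Your derivation is sound throughout:
\begin{itemize}
\item tracking the rear ($x=0$) and front ($x=1$) particles;
\item using order preservation to get a contiguous stretch from position $\alpha$ in pipe $n_1$ to position $\beta$ in pipe $n_2$;
\item getting $n_1\le n_2\le n$ and $\alpha,\beta\in[0,1)$ from the strictly increasing cumulative sums and the hypothesis $\sum_{k=2}^n S_k^{-1}>1$;
\item the check that the fractions sum to one.
\end{itemize}

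One refinement concerns the step you flag as the main obstacle: it is not actually needed. The water of pipe $1$ is uniformly spread over $x\in[0,1]$. After one step, the particle starting at $x$ sits at travel-time coordinate $1+x/S_1$, measured from the inlet of pipe $1$, and this map is affine with slope $1/S_1$. Hence a travel-time interval of length $d$ always corresponds to the fraction $S_1d$ of pipe $1$'s content, whatever the flow rates downstream. The identity $V_kS_kd=V_1S_1d$ is only a mass-conservation consistency check.

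This matters for how the paper uses the result. It applies the formula path by path in branched networks, for example the line $\{1,2\}$ in \Cref{fig:three-plots}, where $S_1=3/4$ and $S_2=1/2$ correspond to unequal flows. There the formula yields the fraction conditional on following the path, which is then weighted by the flow split. The pipe-$1$-side conversion justifies that usage; your equal-flow version does not.
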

In the presence of bifurcations, each possible path from a pipe is treated as a separate subproblem
, and the resulting probabilities are combined as a weighted sum, where the weights correspond to the fraction of flow splitting into each branch. An illustration of this procedure is provided in \Cref{fig:three-plots}, which shows a simple water network composed of four pipes and one absorbing state.

\begin{figure*}[!t]
\label{fig:mc}
    \centering
    \begin{minipage}[t]{0.32\textwidth}
        \centering
        \includegraphics[width=\linewidth]{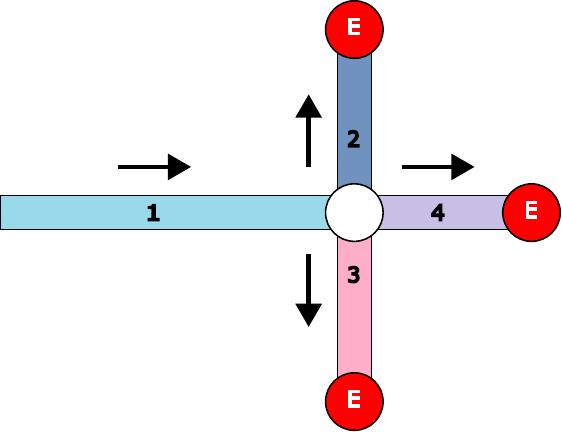}
        \caption*{(a) $t=0$}
    \end{minipage}
    \hfill
    \begin{minipage}[t]{0.32\textwidth}
        \centering
        \includegraphics[width=\linewidth]{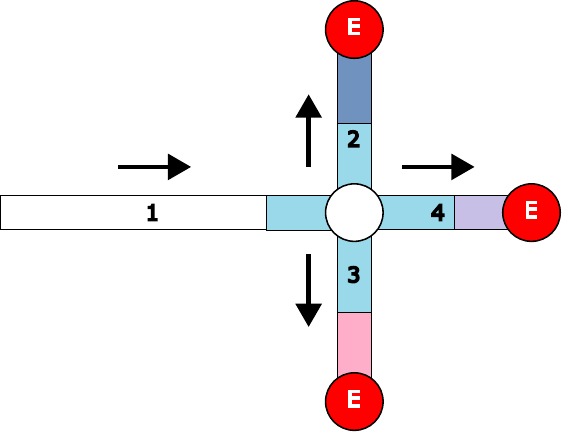}
        \caption*{(b) $t=1$}
    \end{minipage}
    \hfill
    \begin{minipage}[t]{0.32\textwidth}
        \centering
        \includegraphics[width=\linewidth]{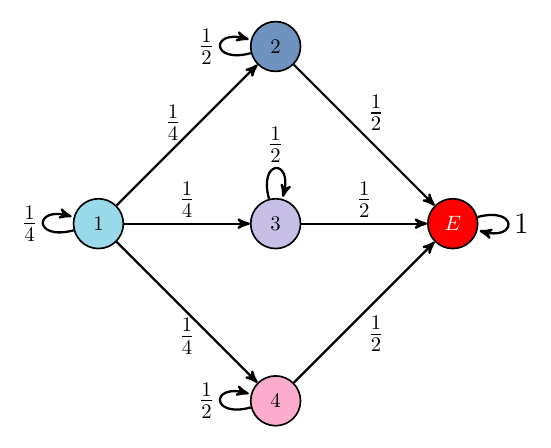}
        \caption*{(c) Resulting MC}
    \end{minipage}
   \caption{Illustration of water network modeling. Each pipe at $t=0$ is marked with a distinct color. At $t=1$, water flows downstream. The red node \(E\) represents the absorbing exit state.
   Transition probabilities are derived by comparing the volume of each color across time steps. Here, \(S_1=3/4\), \(S_2=1/2\), \(S_3=1/2\), and \(S_4=1/2\). To compute the transitions from pipe 1, \Cref{prop:pipes} are applied to the line networks $\{1,2\}$, $\{1,3\}$, $\{1,4\}$, and in each branch it hold that $n_1=1$ and $n_2=2$. We assume that the volumes of the pipes $2, 3, 4$ are the same, thus the proportion of water enter each of them is the same.}
    
    \label{fig:three-plots}
    \vspace{-8pt}
\end{figure*}

\subsection{Estimation of pollution}
\label{sec:modelingpollution}

In the setting of \Cref{sec:watermodeling}, the network consists of $n$ pipes, representing the states, and pollution evolves over $\T$ time steps according to transition matrices $A_t$, $t=0,\ldots,\T-1$, estimated from the water flow. We denote by $M_t$ the mass transition matrix, where $(M_t)_{ij}$ is the amount of pollutant moving from state $i$ to state $j$ between times $t$ and $t+1$.

We assume that $k$ sensors are placed at selected locations in the network, encoded by the binary observation matrix $C \in \{0,1\}^{k \times n} $. These sensors provide measurements of the amount of contaminants at each time $t=0,\ldots,\T$, and the measurements are collected into $\T+1$ vectors $ \rho_t \in \mathbb{R}^k $. With this interpretation, we aim to solve problem \eqref{eq:primal}, minimizing over time the KL divergence of the pollution flow $M_t$, with respect to the prior $A_t$, while respecting sensors measurements and conservation of mass at each time point.

\section{Experiments and results}
\label{sec:experiments}
We present the results of the proposed methodology with data collected at the Smart Water Infrastructure Laboratory (SWIL), at the University of Aalborg. The experiments are designed to assess the ability of the method to reconstruct and localize contaminant transport in water networks under different contamination sources and network configurations.

\subsection{Experimental setup and data}
SWIL is a modular test facility designed to emulate water distribution networks under controlled conditions \cite{val2021smart}. The laboratory setup consists of interconnected pipe modules, pumping stations, and consumer units, allowing flexible reconfiguration of network topology. 
Pictures from the laboratory are presented in \Cref{fig:swil}. 
\begin{figure}
    \centering
    \begin{minipage}[b]{0.49\columnwidth}
        \centering
     \includegraphics[width=\linewidth]
     {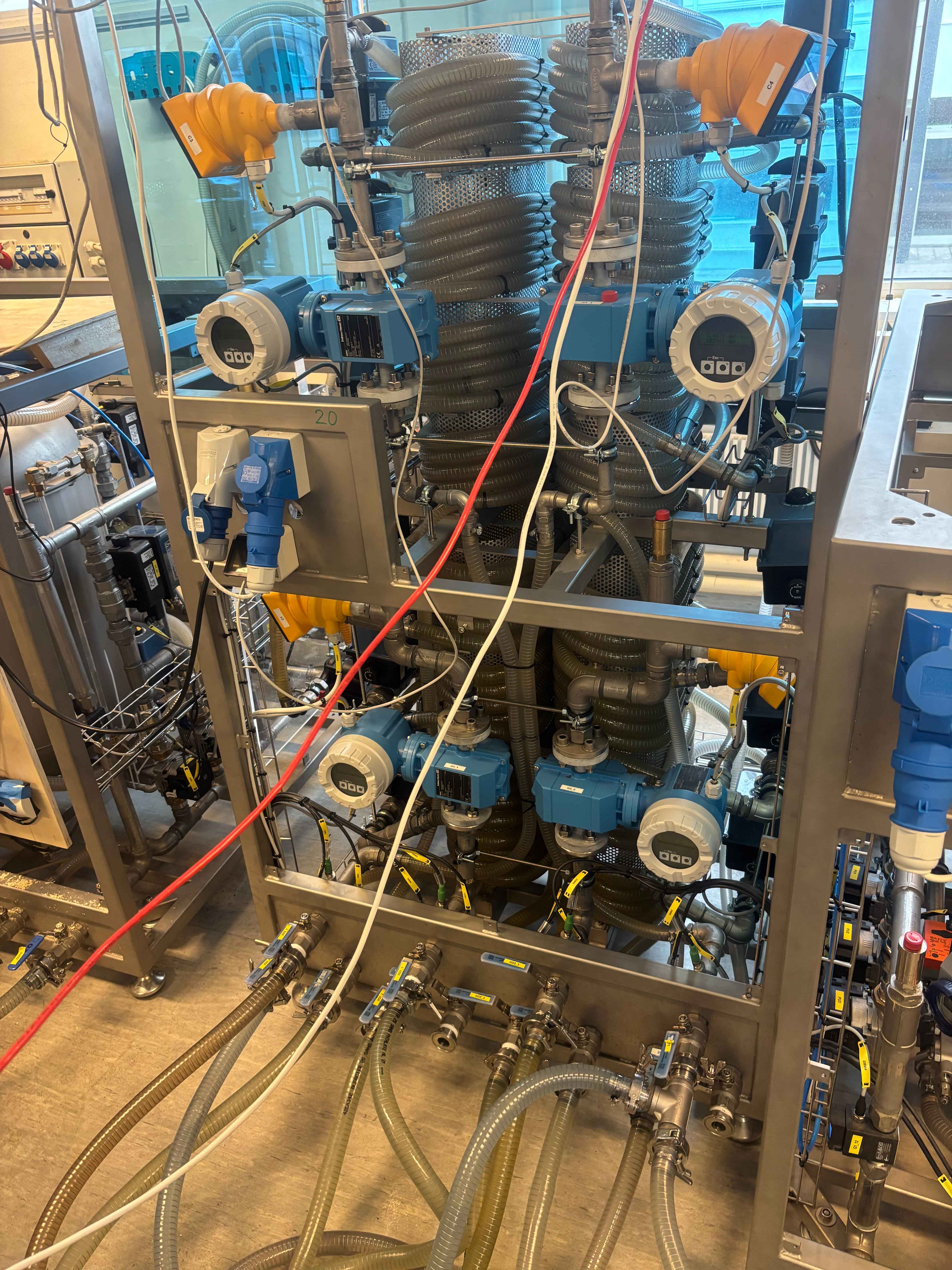}
    \end{minipage}
    \hfill
    \begin{minipage}[b]{0.49\columnwidth}
        \centering
    \includegraphics[width=\linewidth]
    {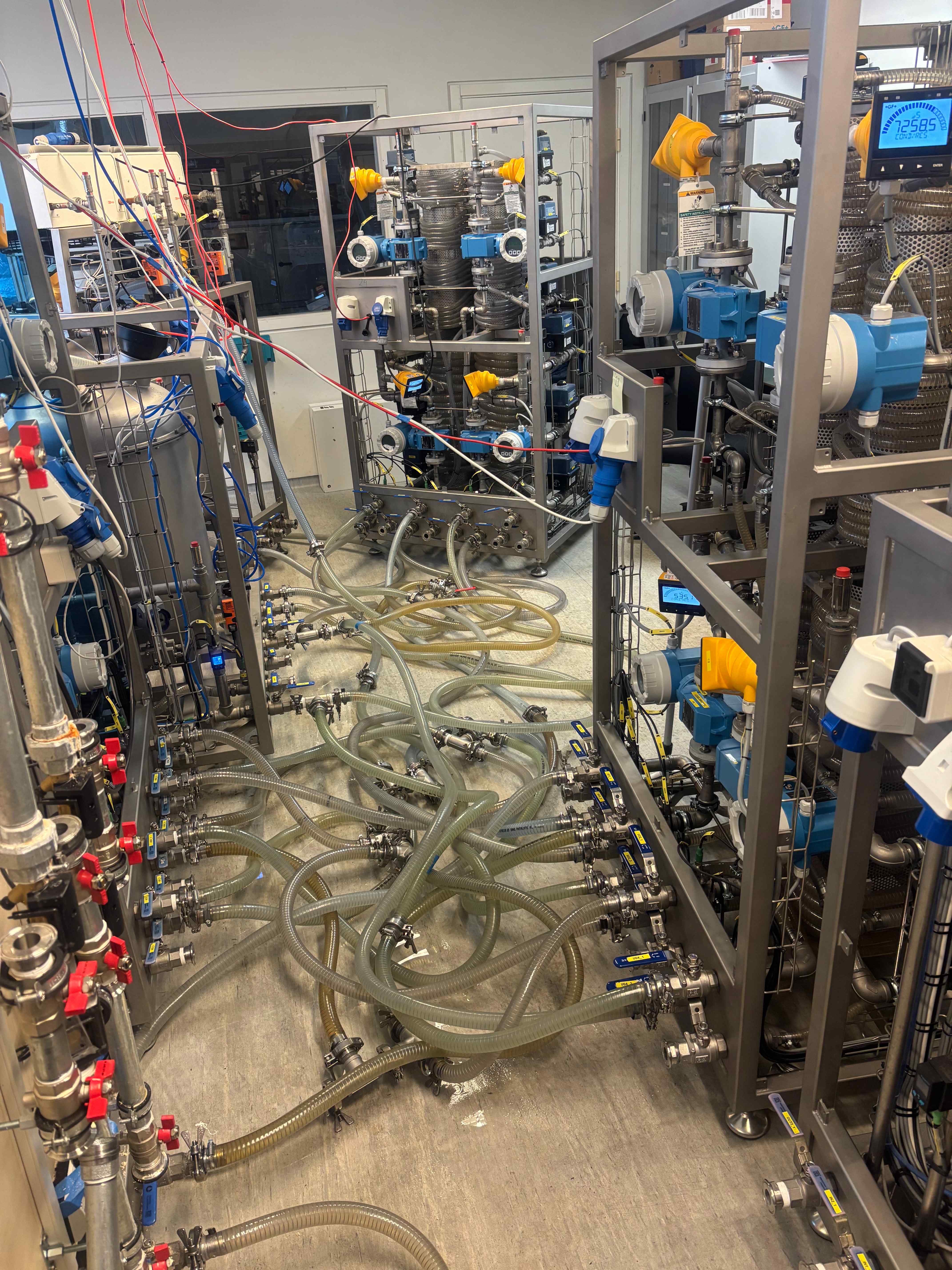}
    \end{minipage}
    \caption{Experimental setup at SWIL. A pipe unite (left), with conductivity sensors (in yellow), used to detect salinity. Several modules are then connected with pipes (right). }
    \label{fig:swil}
    \vspace{-20pt}
\end{figure}
Each pipe is equipped with a flow sensor and a conductivity sensor, providing measurements of flow rate and electrical conductivity at that location. Salt is employed as a tracer, so that conductivity measurements serve as a proxy for contaminant concentration.

For modeling purposes, each physical pipe is discretized into multiple segments, which define the state space of the network model, with the discretization chosen such that each state corresponds to a pipe volume not exceeding 1.5~L. The state space is augmented with additional states representing the pumping stations, as well as an absorbing state accounting for mass exiting the network. Measurements are collected over discrete time steps and preprocessed to obtain estimates of contaminant mass, yielding time series at a resolution of $1$~s.  Flow measurements are used to reconstruct the transition probability matrices $A_t$ governing contaminant transport, while conductivity measurements provide partial observations of the contaminant mass distribution at the corresponding states, as described in \Cref{sec:waternetwork}. 
 Only a subset of the available conductivity sensors is used for estimation, while the remaining ones are used for validation of the reconstructed contaminant distributions.

\subsection{Experimental scenarios}
Two contamination scenarios are considered: contamination occurring in a storage tank, reported as a common and well-documented case in drinking water distribution systems \cite{renwick2019potential}, and contamination occurring in a pipe, which can be observed in connection with deteriorating 
pipe infrastructure \cite{fontanazza2015contaminant}.

In both experiments, contaminant transport is reconstructed using the proposed entropic proximal method. The algorithm is run until the change between successive iterates falls below $10^{-8}$, typically requiring on the order of $10^3$--$10^4$ iterations, using an inexact scheme with two inner sweeps. In both scenarios, non-uniqueness arises only in states downstream of the sensors, and we present here the solution with zero initial mass in those states. 

\paragraph{Contaminated tank}
This setup contains two pumping stations (integrated with a tank) denoted with $P1$ and $P2$, two consumer units $C1$ and $C2$.  Data from two conductivity sensors placed near the consumers are used. The network is illustrated in \Cref{fig:doubletank}, while the properties of the pipes are described in \Cref{tab:contaminatedtank}. After discretization,
the system consists of $n=60$ states, with $k=2$ sensors. The time frame of interest is $\T=196$ seconds. 
\begin{figure}[ht]
    \centering
    \includegraphics[width=\linewidth]{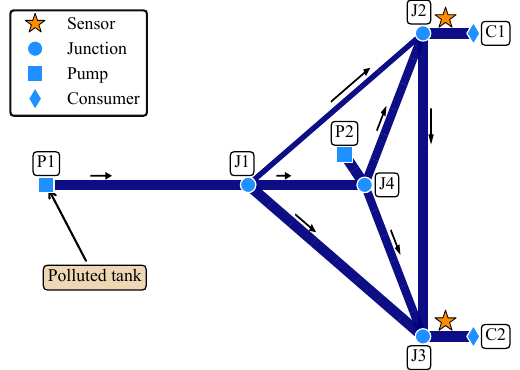}
    \caption{Water network for the \textit{contaminated tank} scenario. Water flows according to the arrows, from the tanks to the consumers. The widths of the segments are scaled to the pipe diameters, but their lengths do not correspond to the actual pipe lengths.}
    \label{fig:doubletank}
\end{figure}
Salty water with $150g$ of salt is introduced in the tank $P1$ and thoroughly mixed by activating the pump, running the water through a separate loop to ensure uniform salinity. Once the experiment begins, fresh water is continuously supplied to the contaminated tank to maintain the system’s operation over a longer period. The system is built in such a way that each consumer receives water from every pumping station.

The estimate is reported in \Cref{fig:simulationtank} for a sequence of time steps. 
Furthermore, to better evaluate the result, we compare it with data from sensors not included in the model, each of them located at the inlet of the corresponding pipe. This comparison, together with the signals from the sensors included in the model, is reported in \Cref{fig:plots_side_by_side}.
The method correctly identifies the contaminated tank as the source of pollution, as can be observed in \Cref{fig:simulationtank} for $t=5$~s, and it estimates at $t=0$, a total of $96g$ between the tank and the inlet of $(P1,J1)$. 
The estimated total mass of salt is consistent with the experimental setup. Out of the initial $150 g$ introduced in the system, roughly $25 g$ are expected to remain in the separate mixing loop used to dissolve the salt.  
Near the end of the experiment, the available time window is insufficient for all contaminated water to reach the sensors, which affects the reconstruction for $t \geq 125$.
This behavior is also visible in \Cref{fig:plots_side_by_side}, where the estimates remain approximately constant while the measured signals diminish.
The method reproduces the qualitative transport pattern of the contaminant, identifying all pipes that are actually crossed by the polluted flow. The only discrepancy is in $(J2,J3)$, in which the sensors reading is zero, possibly due to a non-uniform mixing taking place in the node $J2$.

\begin{figure*}[!t] 
	\centering
	\includegraphics[width=0.97\textwidth]{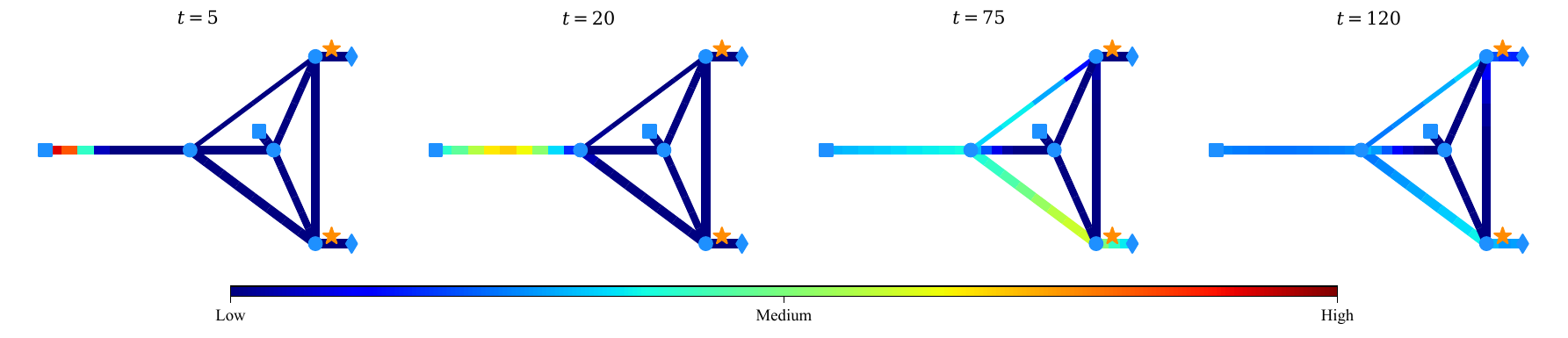}
	\caption{Concentration of salt over time in the reconstructed solution for the \textit{contaminated tank} experiment, obtained using measurements over the full observation horizon ($\T=196$~s). Selected representative time instants are shown.}
	\label{fig:simulationtank}
	\vspace{-8pt}
\end{figure*}

\begin{figure*}[!t]
    \centering
    \begin{minipage}{0.48\textwidth}
        \centering
        \includegraphics[width=\linewidth]{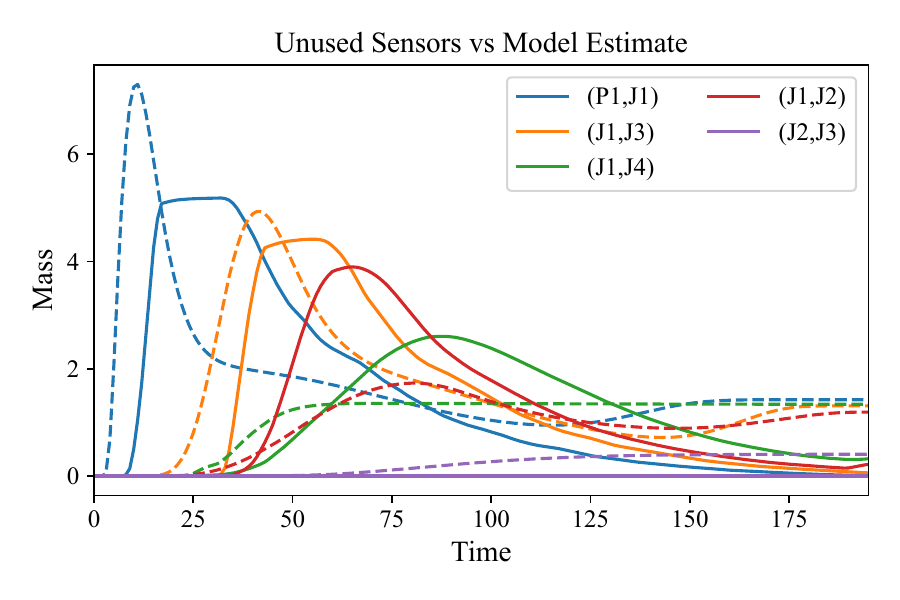}
    \end{minipage}\hfill
    \begin{minipage}{0.48\textwidth}
        \centering
        \includegraphics[width=\linewidth]{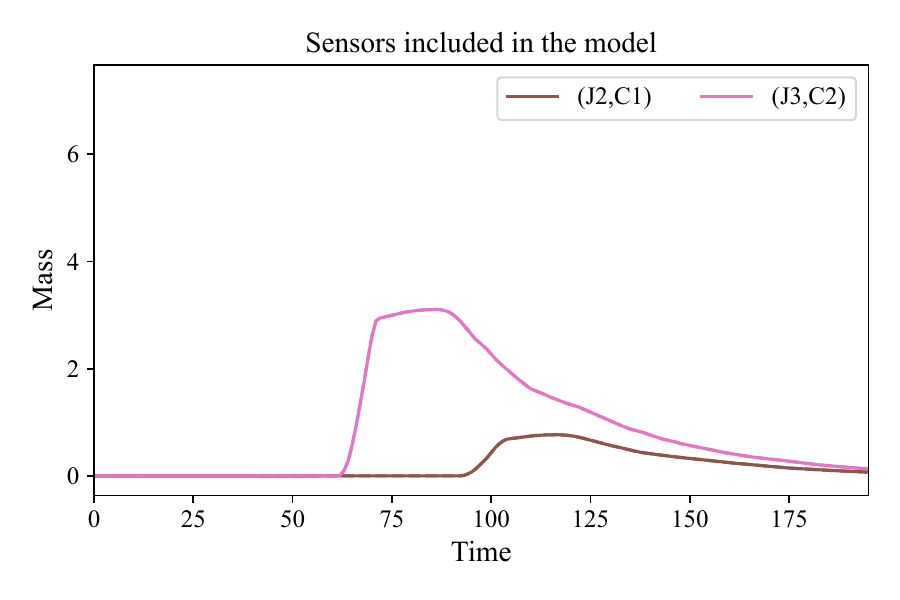}
    \end{minipage}
    \caption{Comparison between measured (solid) and reconstructed (dashed) pollutant mass signals for the \textit{contaminated tank experiment}.
Left: sensors that were not included in the reconstruction model.
Right: sensors used to estimate the pollutant distribution in the network. In this case, the reconstruction matches the sensor measurements exactly. }
    \label{fig:plots_side_by_side}
    \vspace{-8pt}
\end{figure*}

\begin{table}[h]
\centering
\begin{subtable}{0.49\linewidth}
\centering
\begin{tabular}{lcc}
\hline
  Pipe & Length & Diameter \\
    & [m] & [mm] \\
    \hline
    (J1,J2) & 20 & 13 \\
    (J1,J3) & 20 & 25 \\
    (J1,J4) & 20 & 25 \\
    (J2,C1) & 5 & 25 \\
    (J2,J3) & 20 & 25 \\
    (J3,C2) & 5 & 25 \\
    (J4,J2) & 20 & 20 \\
    (J4,J3) & 20 & 20 \\
    (P1,J1) & 20 & 25 \\
    (P2,J4) & 3 & 25 \\
    \hline
  \end{tabular}
\caption{\textit{Contaminated tank}}
\label{tab:contaminatedtank}
\end{subtable}
\begin{subtable}{0.49\linewidth}
\centering
\begin{tabular}{lcc}
\hline
  Pipe & Length & Diameter \\
    & [m] & [mm] \\
    \hline
    (J1,J2) & 20 & 25 \\
    (J1,J2) & 20 & 25 \\
    (J1,J4) & 20 & 13 \\
    (J2,J3) & 20 & 20 \\
    (J2,J4) & 20 & 20 \\
    (J3,C2) & 5 & 25 \\
    (J4,J3) & 20 & 25 \\
    (J4,J5) & 5 & 13 \\
    (J5,C1) & 5 & 25 \\
    (P1,J1) & 20 & 25 \\
    \hline
  \end{tabular}
\caption{ \textit{Contaminated pipe}}
\label{tab:contaminatedpipe}
\end{subtable}
\caption{Pipe properties for the two experiments.}
\label{tab:pipes-experiments}
\end{table}
\paragraph{Contaminated pipe}
This setup consists of one pumping station $P1$ and two consumer units ($C1$ and $C2$). The pipe properties are listed in \Cref{tab:contaminatedpipe}. Two sensors are included, located at the inlet of pipes $(J2,J3)$ and $(J4,J5)$. The network layout is shown in \Cref{fig:contaminatedpipe} and includes two identical parallel pipes between nodes $(J1, J2)$, although only one pipe is active at any time. The system has $n=56$ states and $k=2$ sensors. The time considered is $\T=300$~s.  Before the experiment begins, one of the two pipes is connected to a separate loop in which salty water circulates, making that pipe the only initially contaminated section of the network. The system is first operated with flow passing exclusively through the clean pipe. Then, at $t = 30$~s, the valves are switched so that the flow between $(J1, J2)$ is redirected through the contaminated pipe instead. 
\begin{figure}[ht]
    \centering
    \includegraphics[width=\linewidth]{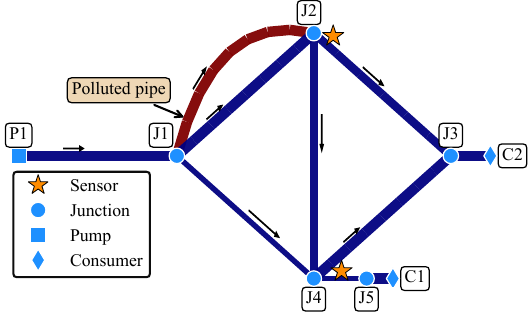}
    \caption{Water network for the \textit{contaminated pipe} scenario. Water flows according to the arrows, from the tanks to the consumers. The widths of the segments are scaled to the pipe diameters, but their lengths do not correspond to the actual pipe lengths.}
    \label{fig:contaminatedpipe}
\end{figure}
The recovered estimate is illustrated in \Cref{fig:simulationpipe} for a sequence of time steps. We compare it with data from the available sensors located in pipes downstream of the contamination (for upstream sensors, the estimate is correctly zero). This comparison and the readings from the sensors included in the model, is reported in \Cref{fig:plots_side_by_side_pipe}.
The method correctly recognizes that the contamination is coming from pipe $(J1,J2)$, but instead of a uniform spreading through the pipe (see \Cref{fig:contaminatedpipe}), suggests a concentration in the central segments (cf. \Cref{fig:simulationpipe} for $t=0$). That is also the reason why in \Cref{fig:plots_side_by_side_pipe}, the signal for $(J1,J2)$ is not matched by the estimate, as the sensor lies at the inlet of the pipe. 
The mass of salt in the system is approximately $40g$, and the model estimates $40.3 g$. The model also identifies the pipes in which contamination takes place, with a quantitative mismatch in $(J4,J3)$. The method indeed expects a uniform mixing in node $J4$, suggesting a signal closer to $(J4,J5)$ then what it actually takes place. 
\begin{figure*}[!t] 
	\centering
	\includegraphics[width=0.97\textwidth]{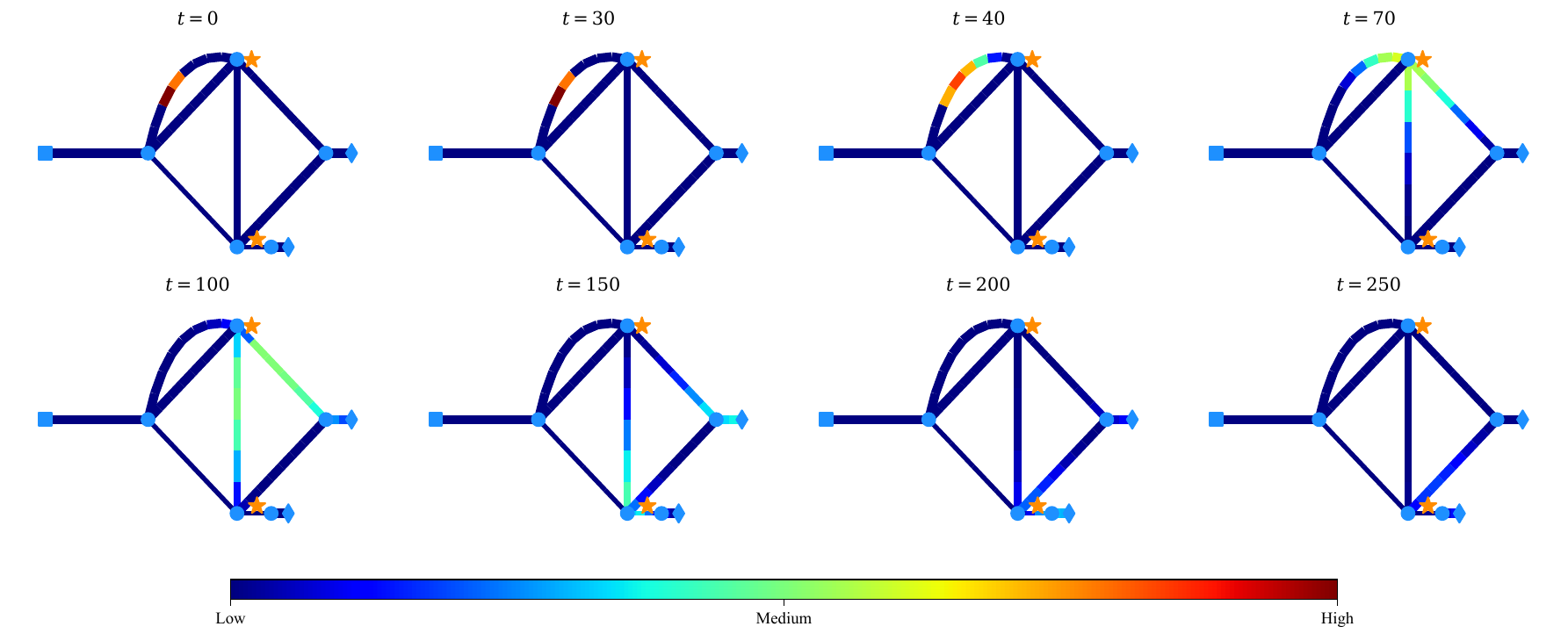}
	\caption{Concentration of salt over time in the reconstructed solution for the \textit{contaminated pipe} experiment, obtained using measurements over the full observation horizon ($\T=300$~s). Selected representative time instants are shown.}
	\label{fig:simulationpipe}
	\vspace{-8pt}
\end{figure*}
\begin{figure*}[!t]
    \centering
    \begin{minipage}{0.48\textwidth}
        \centering
        \includegraphics[width=\linewidth]{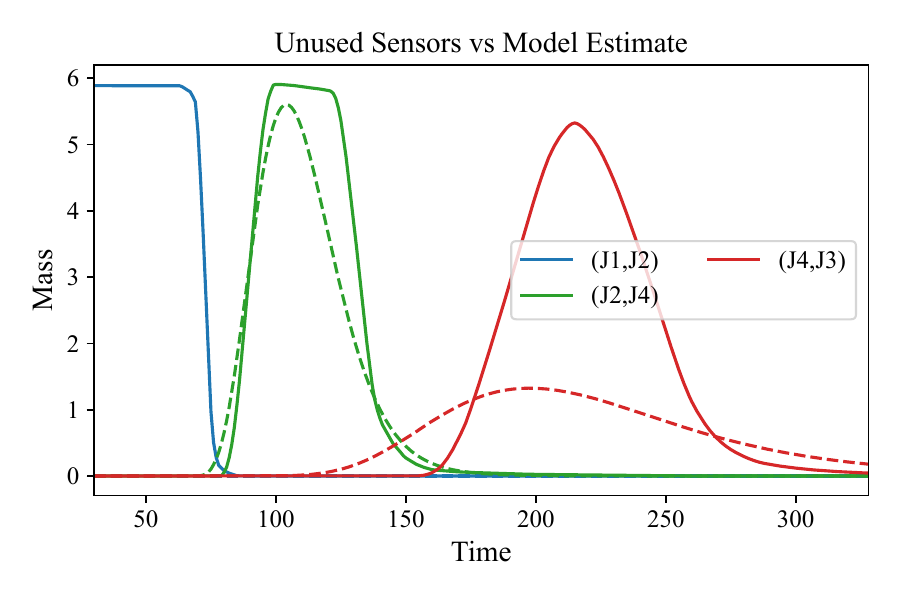}
    \end{minipage}\hfill
    \begin{minipage}{0.48\textwidth}
        \centering
        \includegraphics[width=\linewidth]{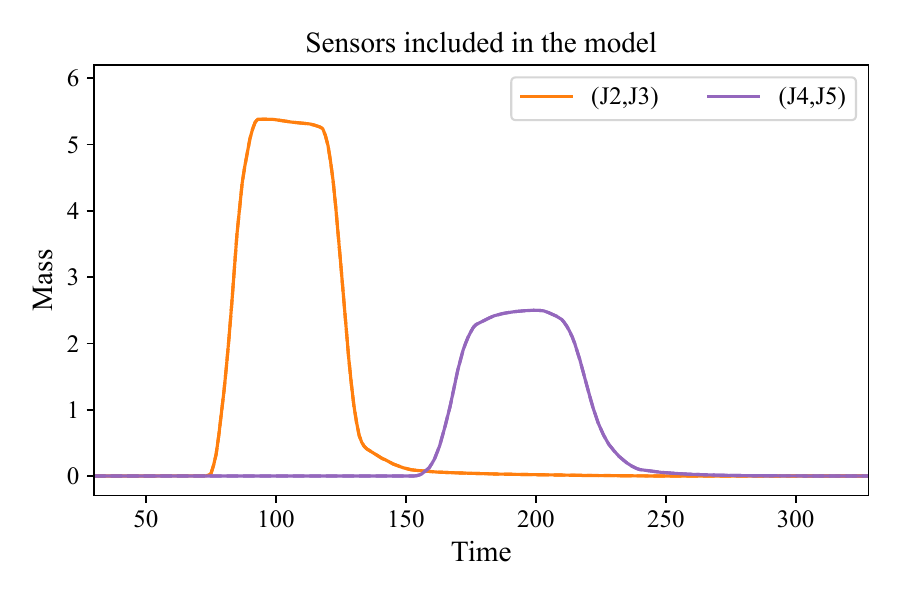}
    \end{minipage}
    \caption{Comparison between measured (solid) and reconstructed (dashed) pollutant mass signals for the \textit{contaminated pipe} experiment.
Left: sensors that were not included in the reconstruction model.
Right: sensors used to estimate the pollutant distribution in the network. In this case, the reconstruction matches the sensor measurements exactly. }
    \label{fig:plots_side_by_side_pipe}
    \vspace{-8pt}
\end{figure*}

\subsection{Discussion}
In both experiments, the methodology correctly identified the portion of the network from which the contamination had originated, reconstructing also with good accuracy the total amount of mass, and identifying the pipes subject to contamination. We observed, however, that in both experiments the method tends to initially localize the contamination within a small region and subsequently to diffuse it more than observed in the measured signals. We attribute this behavior to the probabilistic aspect introduced in \Cref{sec:waternetwork}, which could possibly be mitigated with a finer discretizations in time and space. 
On the other hand, while flow meters measure only mean velocity, real pipe flow is faster in the center and slower near the wall creating a diffusion \cite{pope2001turbulent}. Our probabilistic model has a type of stochastic variability that also results in diffusion. By contrast, EPANET’s  approach \cite{rossman2000epanet} tracks fixed “packages” of water that all move at the same speed along the pipe, and might underestimate how much the contaminant spreads inside the pipe.
Furthermore, while the model assumes perfect mixing at nodes, we observed in practice that when several inflows converge at a node, the division of water among outgoing pipes depends on the junction geometry and on the relative momentum of the incoming streams. 
We also observed that the estimates tend to anticipate the experimentally observed pollution, which may be partly explained by measurement errors. Although the probabilistic framework is robust to moderate perturbations in flow values, this robustness does not extend to errors in the sparsity pattern induced in the transition matrices. Since flow meters may deviate significantly during transient or low-flow conditions, and conductivity sensors may exhibit temporal inertia, mismatches between transport and observation data may arise. As a result, the physically expected evolution may become infeasible for the reconstruction problem when multiple sensors are imposed simultaneously, or otherwise require artificial mass corrections to restore balance.

\section{Conclusion}
\label{sec:conclusion}

In this paper we consider a version of the Schr{\"o}dinger bridge problem with partially observed marginals. We developed a theoretical framework including duality results and observability results along with a scalable method to compute optimal solutions.

The method was validated on experimental data from a laboratory-scale water distribution network, successfully reconstructing contaminant transport and identifying the source from sparse sensor measurements. Despite some model–experiment discrepancies, the results demonstrate the effectiveness and robustness of the proposed approach. A future direction is to study the sensitivity of the method to  model and sensor errors.
This also relates to the sensor placement in the network, and if good designs can be obtained by studying the observability condition.  
Further work also includes generalizing the approach to unbalanced problems where  mass is continuously created or destroyed in the network.

\section*{Declaration of Generative AI and AI-assisted technologies in the writing process}

During the preparation of this work, the authors used ChatGPT (OpenAI) to assist with text editing and to improve clarity. After using this tool, the authors reviewed and edited the content as needed and take full responsibility for the content of the publication.

\appendices 

\section{Proof of \Cref{prop:existence}}
\label{app:exist}
\begin{proof}
    First, observe that the feasible set of \eqref{eq:primal} is closed, as it is defined by linear constraints, and nonempty by the feasiblity assumption. The objective function can be written as a sum of terms
      \[
F_t(M_t) := \ccD\big(M_t \,\big|\, \diag(M_t\one)A_t\big),
\]
    which are non-negative, convex and lower semi-continuous. The infimum is thus finite. To prove existence of a minimizer it remains to analyze unbounded feasible directions along which the objective does not increase, which could prevent attainment. We characterize directions $Z_{[0:\T-1]}$  such that, for any feasible $M_{[0: \T-1]}$ and any scalar $\alpha \geq 0$, the perturbation $M_t+\alpha Z_t$ is feasible for all $t$. By linearity of the constraints \eqref{eq:obst}--\eqref{eq:matching}, this is equivalent to $Z_t \geq 0$ and
\begin{subequations}
\label{eq:dir_constraints}
\begin{align}
& C Z_t \one = 0, 
\qquad \quad \ \text{for } t=0,\dots,\T-1, 
\label{eq:dir_obst}\\
& C Z_{\T-1}^\top \one = 0, 
\label{eq:dir_final}\\
& Z_t \one = Z_{t-1}^\top \one, 
\qquad \text{for } t=1,\dots,\T-1.
\label{eq:dir_matching}
\end{align}
\end{subequations}
Among feasible directions \(Z_t\), we evaluate the recession rate (asymptotic growth) of each term \(F_t\) along the ray \(M_t+\alpha Z_t\):
\[
F_t^\infty(Z_t):=\lim_{\alpha\to\infty}\frac{F_t(M_t+\alpha Z_t)-F_t(M_t)}{\alpha}.
\]
A direct computation gives
\[
F_t^\infty(Z_t)=\ccD \big(Z_t\,\big|\,\diag(Z_t\one)A_t\big)\ge 0,
\]
with equality if and only if
\begin{equation}
\label{eq:scaling}
Z_t=\diag(Z_t\one)\,A_t.
\end{equation}
Thus the only feasible recession directions with zero recession value satisfy \eqref{eq:dir_constraints} and \eqref{eq:scaling}.  Combining these conditions, with $z_t := Z_t\one$, we obtain
\[
z_{t+1} = A_t^\top z_t, \qquad t=0,\dots,\T-1.
\]
Define $\Phi_0 := I$ and $\Phi_t := A_{t-1}^\top \cdots A_0^\top$ for $t\ge 1$.
Then $z_t = \Phi_t z_0$ for $t=0,\dots,\T$.
Hence the constraints \eqref{eq:dir_obst}--\eqref{eq:dir_final} are equivalent to
\[
z_0 \in \ker(\mathcal O_\T), 
\qquad
\mathcal O_\T :=
\begin{bmatrix}
C\Phi_0 \\
\vdots \\
C\Phi_\T
\end{bmatrix}.
\]
Furthermore, if we consider $z_0 \in \ker(\mathcal{O}_\T) \cap \mR^n_+$, then
\[
0=C \Phi_tz_0 = \sum_i (z_0)_i  C \Phi_t e_i
\] 
is a combination of non-negative vectors, and  $(z_0)_i >0 \implies C \Phi_t e_i=0 \ \forall t$, meaning that the $i$-th column of $\mathcal{O}_\T$ contains only zeros. We define the index sets
\[
\mathcal I := \big\{\, i\in\{1,\dots,n\} : \mathcal O_\T e_i = 0 \,\big\},
\quad
\mathcal J := \{1,\ldots,n\} \setminus \mathcal{I} .
\]
We showed that if $z_0\in\ker(\mathcal O_\T)\cap\mR^n_+$, then $\supp(z_0) \subseteq \mathcal I$.
By definition of $\mathcal I$ and nonnegativity of the matrices $A_t$, the set $\mathcal I$ is forward invariant with respect to the supports of $A_t$, in the sense that if $i\in\mathcal I$ and $(A_t)_{ij}>0$, then $j\in\mathcal I$.  Since \(\supp(M_t)\subseteq\supp(A_t)\), no feasible \(M_t\) can move mass from \(\mathcal I\) to \(\mathcal J\).
Let $M_{[0:\T-1]}$ be any feasible solution. We construct a feasible $\tilde M_{[0:\T-1]}$ with no larger objective value by keeping all rows indexed by $\mathcal J$ unchanged and replacing the rows indexed by $\mathcal I$ by KL-minimizing rows with the same row sums. In particular, 
\[
(\tilde M_t)_{ij}=
\begin{cases}
(M_t)_{ij}, 
& \text{if } i\in\mathcal J,\ j\in\{1,\dots,n\},\\[1mm]
0, 
& \text{if } i\in\mathcal I,\ j\in\mathcal J,\\[1mm]
(\tilde M_t\one)_i\,(A_t)_{ij},
& \text{if } i\in\mathcal I,\ j\in\mathcal I,
\end{cases}
\]
where the row sums \((\tilde M_t\one)_i\) for \(i\in\mathcal I\) are chosen recursively so that the matching constraints \(\tilde M_t\one=\tilde M_{t-1}^\top\one\) hold. 
This is feasible because the rows in $\mathcal J$ are not changed, no mass can leave $\mathcal I$ toward $\mathcal J$, and the matching constraints only prescribe row/column sums. 
Moreover, the observation constraints are unchanged: by definition of $\mathcal I$, no state in $\mathcal I$ contributes to any measured component at any time, so modifying only rows with index $i\in\mathcal I$ does not affect \eqref{eq:obst}--\eqref{eq:finalT}.
Due to the properties of KL divergence, the objective function decouples row-wise, and it is nonnegative, vanishing if and only if
$(M_t)_{ij}=(M_t\one)_i (A_t)_{ij} \ \forall \, j$.
Hence, for each $t$, the above modification replaces every row indexed by $\mathcal I$ by its unique minimizer given its row sum, yielding, for $F=\sum_t F_t,$
\[
F(\tilde M)\le F(M),
\]
with equality if and only if \((M_t)_{ij}=(M_t\one)_i(A_t)_{ij}\) for all \(t\), all \(i\in\mathcal I\), and all \(j\in\{1,\dots,n\}\).
Therefore, the infimum of \eqref{eq:primal} is attained over the restricted feasible set
\[
\mathcal F^\star  \!:= \! \Big\{M \text{ feasible}  \!: \!
(M_t)_{ij}=(M_t\one)_i (A_t)_{ij} \ \forall\, t, \forall\, i \! \in \! \mathcal{I},\ \forall j
\Big\} \!.
\]
On $\mathcal F^\star$, any recession direction affects only the rows indexed by $\mathcal I$ and does not change the value of the objective. Therefore, along any unbounded feasible direction the objective remains constant.
Since $\mathcal F^\star$ is nonempty and closed, the objective attains its minimum on $\mathcal F^\star$ by \cite[Thm.~27.3]{Rockafellar1970}, and a minimzer exists on $\mathcal F$.
\end{proof}

\section{Proof of \Cref{prop:uniqueness}}
\label{app:proof1}
We show that systems \eqref{eq:systemA} and \eqref{eq:systemM} have the same observability properties. Introduce the auxiliary system
\begin{equation}
\label{eq:nonsystem}
\begin{aligned}
\mu_{t+1} &= \overline A_t^\top \mu_t,\\
\rho_t &= C\mu_t,
\end{aligned}
\end{equation}
where $
\overline A_t^\top := A_t^\top \overline C^\top \overline C .
$
Let $
P:=\overline C^\top \overline C$ and $Q:=C^\top C$.
The matrix \(Q\) extracts the coordinates corresponding to the observed states, while \(P\) extracts the complementary coordinates. In particular $CQ=C$ and $CP=0$.

Denote by \(\mathcal O_r\), \(\tilde{\mathcal O}_r\), and \(\overline{\mathcal O}_r\) the \(r\)-step observability matrices associated with the pairs
\[
(A_t^\top,C), \qquad (\mathcal A_t^\top,C), \qquad (\overline A_t^\top,C),
\]
respectively. We will prove by induction on \(r\) that
\begin{equation}
\label{eq:kernels-equal}
\ker(\mathcal O_r)=\ker(\tilde{\mathcal O}_r)=\ker(\overline{\mathcal O}_r),
\qquad \forall r\ge 1.
\end{equation}
For \(r=1\), the three observability matrices reduce to \(C\), so the claim is immediate.
Assume now that \eqref{eq:kernels-equal} holds for some \(r-1\ge 1\). We prove it for \(r\).

First, we show \(\ker(\mathcal O_r)=\ker(\overline{\mathcal O}_r)\).
The first \(r-1\) block rows of \(\mathcal O_r\) and \(\overline{\mathcal O}_r\) have the same kernel by the inductive hypothesis.  Let \(x\in\ker(\mathcal O_r)\). Then, consider the \(r\)-th observability equation
\[
C A_{r-2}^\top A_{r-3}^\top \cdots A_0^\top x = 0.
\]
Insert \(I=P+Q\) after each factor $A_t$:
\begin{align*}
0
&= C A_{r-2}^\top (P+Q) A_{r-3}^\top (P+Q)\cdots A_0^\top (P+Q)x .
\end{align*}
Expanding the product, every term containing at least one factor \(Q\) vanishes. Indeed, consider such a term and let \(Q\) be its rightmost occurrence (i.e., the one closest to \(x\)). Then all factors to its right are equal to \(P\), so this term reduces to one of the observability conditions of order at most \(r-1\). Hence it is zero by the inductive hypothesis \(\ker(\mathcal O_{r-1})=\ker(\overline{\mathcal O}_{r-1})\). Therefore, the only surviving term is the one containing only \(P\)'s, namely
\[
C A_{r-2}^\top P A_{r-3}^\top P \cdots A_0^\top P x = 0.
\]
This is exactly the \(r\)-th observability equation for \(\overline{\mathcal O}_r\). Thus \( x\in\ker(\mathcal O_r)\ \Longrightarrow\ x\in\ker(\overline{\mathcal O}_r) \), and because the argument relies on a reversible chain of equalities, the converse also holds.

Now we show  \(\ker(\tilde{\mathcal O}_r)=\ker(\overline{\mathcal O}_r)\).
Recall that the optimal dynamics \(\mathcal A_t\) has the form
\[
\mathcal A_t^\top=\diag(w_{t+1})\,A_t^\top\,\diag(u_t\oslash w_t).
\]
In the product defining the \(r\)-th observability equation, the factors \(\diag(w_t)^{-1}\) and \(\diag(w_t)\) cancel telescopically (with \(\diag(w_0)=I\)), so only a left factor \(C\diag(w_{r-1})\) remains. 
This left multiplication can be removed without affecting the kernel, because
\[
C\diag(w_{r-1}) = \big(C\diag(w_{r-1})C^\top\big)\,C,
\]
and \(C\diag(w_{r-1})C^\top\in\mathbb R_{\geq 0}^{k\times k}\) is diagonal and invertible, since all entries of \(w_{r-1}\) are positive.
Moreover, since \(u_t=\exp(C^\top\lambda_t)\), the diagonal matrix
\[
D_t:=\diag(u_t)=\diag(\exp(C^\top\lambda_t))
\]
satisfies $D_t = D_t Q + P$, 
because \(u_t\equiv 1\) on the unobserved coordinates selected by \(P\). 
Therefore, the \(r\)-th observability equation for \(\tilde{\mathcal O}_r\) is equivalent to
\[
C A_{r-2}^\top D_{r-2} \cdots A_0^\top D_0 x = 0.
\]

Let \(x\in\ker(\tilde{\mathcal O}_r)\). By the inductive hypothesis, the first \(r-1\) observability equations for \(\tilde{\mathcal O}_{r-1}\) and \(\overline{\mathcal O}_{r-1}\) are equivalent. To compare the \(r\)-th equation, expand
\[
C A_{r-2}^\top (D_{r-2}Q+P)\cdots A_0^\top (D_0Q+P)x.
\]
As before, every term containing at least one factor \(Q\) vanishes by the first \(r-1\) observability equations and the inductive hypothesis, and the only surviving term is again the \(r\)-th observability equation for \(\overline{\mathcal O}_r\).
Hence,
\[
x\in\ker(\tilde{\mathcal O}_r)\ \Longrightarrow\ x\in\ker(\overline{\mathcal O}_r).
\]
The converse inclusion follows from the same expansion argument, obtaining \(
\ker(\tilde{\mathcal O}_r)=\ker(\overline{\mathcal O}_r).
\)

In conclusion, \(\ker(\mathcal O_r)=\ker(\tilde{\mathcal O}_r)\) for all \(r\), and in particular for \(r=\T+1\).

\bibliographystyle{abbrv}

\bibliography{./references}

\newpage


\end{document}